\pdfoutput=1
\documentclass[10pt]{article}
\usepackage[a4paper, margin=1in]{geometry}
\usepackage{changepage} 
\usepackage{mathtools}
\usepackage{amsmath, amssymb, amsthm}
\usepackage[utf8]{inputenc}
\usepackage[english]{babel}
\usepackage[numbers]{natbib}
\usepackage{url}
\usepackage[usenames]{color}
\usepackage{mathabx}
\usepackage[colorlinks=true]{hyperref}
\usepackage[capitalise]{cleveref}
\usepackage{tabularx}
\usepackage{breqn}
\usepackage[title]{appendix}
\usepackage{tikz}
\usepackage{xcolor}
\usepackage{comment}

\newtheoremstyle{mystyle}
  {}
  {}
  {\itshape}
  {}
  {\bfseries}
  {.}
  { }
  {\thmname{#1}\thmnumber{ #2}\thmnote{ (#3)}}
\theoremstyle{mystyle}
\newtheorem{theorem}{Theorem}

\newtheorem{lemma}{Lemma}

\newtheorem{cor}{Corollary}

\newtheorem{remark}{Remark}

\newcommand{\floor}[1]{\left\lfloor #1 \right\rfloor}

\newcommand{\HW}{\operatorname{HW}}

\title{Undecidability, Chaos and Universality in Arithmetic Terms}

\author{Gabriel Istrate\footnote{University of Bucharest, \tt{Gabriel.Istrate@gmail.com}}, Mihai Prunescu\footnote{Research Center for Logic, Optimization and Security (LOS) (Faculty of Mathematics and Computer Science, University of Bucharest, Academiei 14, Bucharest (RO-010014), Romania), e-mail address: {\tt mihai.prunescu@gmail.com}.}\footnote{ Simion Stoilow Institute of Mathematics of the Romanian Academy (Research unit 5, P. O. Box 1-764, Bucharest (RO-014700), Romania), e-mail address: {\tt mihai.prunescu@imar.ro}.}, Joseph M. Shunia\footnote{independent researcher, \tt{jshunia@gmail.com}}}

\date{}

\begin{document}

\maketitle

\begin{abstract} \noindent 
Arithmetic terms are finite fixed compositions of additions, subtractions, multiplications, divisions with remainder and exponentiations, containing variables interpreted as natural numbers. They build a well-defined notion of closed formula. It is known that every Kalmar elementary function can be expressed as an arithmetic term. In this paper, one studies the power of expression of the arithmetic terms. By interpreting Hilbert's Tenth Problem in arithmetic terms, it is shown that it is undecidable whether one-variable arithmetic term takes the value $0$, or whether two such terms take or not the same values. An algorithm constructs the arithmetic term representing an arbitrary function, which has been defined by a recurrence rule. This construction has various applications. Functions with chaotic behavior, like the Logistic Map, can be expressed as arithmetic terms. Finally, we construct a Turing complete arithmetic term and we express a Turing universal function by an arithmetic term. A somewhat unexpected application: there is a {\it wise} arithmetic term. It gets (the code of) a sentence, (the code of) a formalized theory and a bound $B$, and after performing a constant number of operations, it outputs (the code of) a proof of the sentence using the given theory if such a proof does exist and its length is less than $B$. Otherwise, it outputs $0$.  \\
\\
\noindent
\textbf{2020 Mathematics Subject Classification:} 03D35, 03D40, 11D45, 37E05.
\end{abstract}

\section{Introduction} \label{Sect.Introduction}

An arithmetic term is a finite expression built from variables ranging over $\mathbb N$ by composing a fixed stock of arithmetic operations, such as addition, truncated subtraction, multiplication, division with remainder, and exponentiation. In 1964, Rödding \cite{Rodding1964} (in German) showed that a finite quantity of elementary operations suffices to form a substitution basis for the class of Kalmar elementary functions, which marked the beginning of the search for substitution bases consisting of ``simple'' arithmetic operations. This search was considered concluded in 2002 (see Marchenkov \cite[Introduction]{marchenkov2007superposition}), when Mazzanti \cite{mazzanti2002plainbases} proved that \begin{equation*}  \langle x + y , \ x \dotdiv y , \ x y , \ \floor{x / y} , \ x^y \rangle = \mathcal{E}^3 \end{equation*} 
In \cite{prunescusaurasshuniaminimal} it is shown that 
\begin{equation*}  \langle x + y , \ x \bmod y , \ 2^x \rangle = \mathcal{E}^3 \end{equation*} 
and that this basis is minimal. However, we will work with arithmetic terms in the wider sense. In particular, we can use in a term both functions $\lfloor x/y \rfloor$ and $x \bmod y$ and the exponentiation in various forms like $2^x$, $7^x$ or $x^y$. Already represented functions will be used in short form, like macros. 

We collect below the main phenomena treated in this paper.
\begin{itemize}
    \item Hilbert's Tenth Problem can be interpreted in arithmetic terms: it is algorithmically undecidable whether a one-variable arithmetic term ever takes the value $0$, and the identity problem for one-variable arithmetic terms is also undecidable. This uses Matiyasevich's theorem \cite{matiyasevich1993hilbert}; see \cref{Sect.HTP}.
    \item Several useful elementary building blocks have explicit arithmetic-term representations, including binomial coefficients, $\gcd$, the $2$-adic valuation $\nu_2$, and the Hamming weight $\HW$. Relevant sources include \cite{mazzanti2002plainbases, marchenkov2007superposition, prunescushunia2024gcd, matiyasevich1993hilbert}; see \cref{Sect.StartingSteps}.
    \item Mazzanti's hypercube method \cite{mazzanti2002plainbases} gives arithmetic terms that count the bounded solutions of suitable exponential Diophantine equations. This solution-counting construction is one of the basic tools used throughout the paper; see \cref{Sect.Hypercube}.
    \item A general recurrent sequence whose recurrence rule is given by an arithmetic term, and for which a suitable arithmetic-term bound is available, has its $n$-th term represented by an arithmetic term. This is the main metatheorem, stated as \cref{metatheorem}.
    \item The method gives closed arithmetic-term descriptions for several dynamical examples: the rational Tent Map \cite{Bruin}, the rational Logistic Map \cite{May}, the odd Collatz sequence \cite{Collatz}, and finite-color approximations of Julia and Mandelbrot sets \cite{Julia}; see \cref{Sect.Physics}.
    \item There is a Turing complete arithmetic term. More precisely, a fixed term can simulate bounded computations of deterministic Turing machines \cite{Turing, AroraBarak}, and yields a universal term $\Upsilon$ for bounded computation; see \cref{Section.TCT}. The Turing complete term gets as arguments the code of a Turing Machine, the code of its initial configuration and a natural number $n$. The term outputs the configuration after the $n$-th steps after performing a fixed number of computations. 
    \item The universal construction can be expressed more readably with goto programs, which are Turing complete \cite{Schoning2008}. This gives a compiler-like translation from bounded algorithms to arithmetic terms, including examples for addition, multiplication, modulo, and exponentiation; see \cref{Section.Gototerm}.
    \item A ``wise'' arithmetic term can perform bounded proof search for recursively axiomatizable theories: given a formal statement and a bound, it returns a proof code if one exists below the bound and returns $0$ otherwise. The proof-checking step uses the standard polynomial-time verifiability of formal proofs, see for example \cite{immerman1999descriptive}; see \cref{theoremwiseterm}.
\end{itemize}

The paper is organized as follows. \Cref{Sect.HTP,Sect.StartingSteps,Sect.Hypercube} give the undecidability result and the technical arithmetic-term tools. \Cref{Sect.General} proves the recurrence-representation metatheorem, which is then applied to dynamical examples in \cref{Sect.Physics}. \Cref{Section.TCT,Section.Gototerm} construct universal terms for Turing machines and goto programs, and the final section applies this universality to bounded proof search.

\section{Hilbert's Tenth Problem in arithmetic terms}\label{Sect.HTP}

Using an easy interpretation of Hilbert's Tenth Problem, we show that it is algorithmically undecidable whether arithmetic terms take the value zero. A bijection  $C : \mathbb N \times \mathbb N \rightarrow \mathbb N$ is generally called a {\bf pairing function}. 

\begin{lemma}
    There are pairing functions $C(x, y)$such that the function itself and the inverse functions $L, R : \mathbb N \rightarrow \mathbb N$ with the property that
    $$\forall n\,\,C(L(n), R(n)) = n,$$
    are representable by arithmetic terms. 
\end{lemma}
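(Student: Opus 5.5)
I will use the Cantor pairing function
$$C(x,y) = \frac{(x+y)(x+y+1)}{2} + x,$$
which is a bijection $\mathbb N\times\mathbb N\to\mathbb N$. It is an arithmetic term as written: the product $(x+y)(x+y+1)$ is even, so $\lfloor (x+y)(x+y+1)/2\rfloor$ is computed exactly by integer division. The work is in writing the inverses $L$ and $R$ as terms.

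Given $n$, let $w = x+y$ be the index of the diagonal containing $n$. It is the largest natural number with $T(w) := w(w+1)/2 \le n$. From the inequality $w(w+1)\le 2n$ one gets the closed form
$$w = \left\lfloor \frac{\lfloor\sqrt{8n+1}\rfloor - 1}{2}\right\rfloor .$$
Then $L(n) = n \dotdiv T(w)$ and $R(n) = w \dotdiv L(n)$, and truncated subtraction is harmless here because $T(w)\le n$ and $L(n)\le w$. So everything reduces to expressing the integer square root $\lfloor\sqrt m\rfloor$ as an arithmetic term.

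The main obstacle is this integer square root, since it is not one of the basic operations. I would first try the hypercube counting technique announced for \cref{Sect.Hypercube}: $\lfloor\sqrt m\rfloor$ is the number of $k\in\{1,\dots,m\}$ with $k^2\le m$. That is a count of bounded solutions of a simple Diophantine condition, which the hypercube method turns into a term.

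If I want to avoid forward references, I would use a direct counting trick with base-$B$ digits, taking $B = 2^{2m+2}$, which is a term. The characteristic function $[k^2\le m]$ equals $1 \dotdiv \big((k^2 \dotdiv m)\big)$ made $0/1$, for instance as $1 \dotdiv (k^2\dotdiv m)$, which works since all quantities are naturals. I would encode the values for $k=1,\dots,m$ as digits of a single number and then sum the digits. Summing digits, however, is itself a nontrivial term, and this is exactly what the hypercube method supplies. So in practice I would commit to citing the solution-counting construction, or the known fact from \cite{mazzanti2002plainbases} that $\mathcal E^3$-functions, in particular $\lfloor\sqrt m\rfloor$, are arithmetic terms.

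Once $\lfloor\sqrt{\cdot}\rfloor$ is available, substituting it into the formulas for $w$, $L$ and $R$ gives the terms. Finally I would check $C(L(n),R(n)) = n$ directly: $L+R = w$, so $C(L,R) = T(w) + L = n$.
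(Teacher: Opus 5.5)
Your proof is correct, and in substance it is the first argument the paper gives: take the Cantor pairing function and note that its inverses are Kalmar elementary, hence terms by Mazzanti's theorem. You make this more concrete by reducing both inverses to the single function $\lfloor\sqrt{\cdot}\rfloor$, via $w=\lfloor(\lfloor\sqrt{8n+1}\rfloor-1)/2\rfloor$, $L(n)=n\dotdiv\lfloor w(w+1)/2\rfloor$ and $R(n)=w\dotdiv L(n)$. You then obtain $\lfloor\sqrt m\rfloor$ either from Mazzanti or from the hypercube method. For the hypercube route you need an equation, for example counting the pairs $(k,z)\in[0,m]^2$ with $k^2+z=m$, which gives $\lfloor\sqrt m\rfloor+1$.

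The paper's preferred explicit route is genuinely different. It abandons Cantor's function for $C(x,y)=2^x(2y+1)-1$, whose inverses $L(n)=\nu_2(n+1)$ and $R(n)=\lfloor(\lfloor(n+1)/2^{\nu_2(n+1)}\rfloor-1)/2\rfloor$ are short closed terms built on the explicit term for $\nu_2$. So it needs no square root, no general representation theorem and no solution-counting machinery. Your route keeps the classical pairing, which the statement does not demand. The cost is either the very large terms produced by Mazzanti's general theorem or the much heavier hypercube construction.

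A minor correction to your side remark: summing $0/1$ digits in base $B$ is easy. It is $\HW$ of the number, or the number modulo $B-1$ once the digit sum is below $B-1$. The real difficulty in your ``direct'' idea is writing $\sum_k [k^2\le m]B^k$ itself as a term. That is exactly what the affine trick with $\delta(a,w)$ and the generalized geometric series in the hypercube method provide.
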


\begin{proof}
    Cantor's Pairing Function
    $$C(x,y) = \left \lfloor \frac{(x+y)(x+y+1)}{2} \right \rfloor + x$$
    is an arithmetic term. The two inverse functions $L(n)$ and $R(n)$ are Kalmar elementary, so they can be represented by arithmetic terms by Mazzanti's Theorem. Such explicit terms are given in \cite{prunescusauras2025manyfunctions}. However, these terms are very complicated and so not appropriate for demonstrative use. We may alternatively consider the pairing function:
    $$C(x,y) = 2^x(2y+1)-1,$$
    with inverse functions:
    $$L(n) = \nu_2(n+1),$$
    $$R(n) = \left \lfloor \frac{\left ( \left \lfloor \frac{n+1}{2^{\nu_2(n+1)}} \right \rfloor -1 \right )}{2} \right \rfloor.$$
    The 2-adic valuation $\nu_2(n)$ is represented by a relatively simple arithmetic term, see \cite{mazzanti2002plainbases}. 
    $$\nu_2(n) = \floor{\frac{(\gcd(n, 2^n))^{n+1} \bmod (2^{n+1}-1)^2}{2^{n+1}-1}}.$$
\end{proof} 

Given any pairing function $C(x,y)$ with inverses $L(n)$ and $R(n)$ it is easy to see that:
$$\forall\, x_0, x_1, \dots , x_k \in \mathbb N \,\, \exists\, m \in \mathbb N\,\,\,\,R(m) = x_0 \wedge RL(m) = x_1 \wedge \dots \wedge RL^{k}(m) = x_k.$$

\begin{theorem}
    The following problem is algorithmically unsolvable: Given some univariate arithmetic term $t(m)$, decide whether there exists $m \in \mathbb N$ such that $t(m) = 0$. 
\end{theorem}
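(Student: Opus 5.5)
We reduce Hilbert's Tenth Problem to the stated problem. By Matiyasevich's theorem \cite{matiyasevich1993hilbert} there is no algorithm that, given a polynomial $P(x_0,\dots,x_k)$ with integer coefficients, decides whether $P$ has a zero in $\mathbb N^{k+1}$. Write $P = P_1 - P_2$, where $P_1,P_2$ have coefficients in $\mathbb N$. Then $P(\bar x)=0$ holds iff
$$Q(\bar x) := (P_1(\bar x)\dotdiv P_2(\bar x)) + (P_2(\bar x)\dotdiv P_1(\bar x)) = 0.$$
$Q$ is an arithmetic term in $k+1$ variables. If truncated subtraction is not taken as primitive, we can replace $Q$ by the expression $(P_1-P_2)^2$ computed as $P_1^2+P_2^2 - 2P_1P_2$, which never goes negative. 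Alternatively we can use the representation of $\dotdiv$ within the basis of Mazzanti's theorem.

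Next we collapse the $k+1$ variables into one using the pairing function of the preceding lemma. Define
$$t(m) := Q\bigl(R(m),\, R(L(m)),\, R(L^2(m)),\, \dots,\, R(L^k(m))\bigr).$$
Since $L$ and $R$ are arithmetic terms, for instance $L(n)=\nu_2(n+1)$ with $\nu_2$ given by the explicit term above, $t$ is a univariate arithmetic term. It is obtained effectively from $P$: $k$ is fixed by $P$, so the composition is a fixed finite expression.

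Correctness has two directions. If $t(m)=0$ for some $m$, then $\bar x=(R(m),RL(m),\dots,RL^k(m))$ is a zero of $Q$, hence of $P$. Conversely, suppose $P(x_0,\dots,x_k)=0$. The remark after the lemma gives an $m$ with $RL^i(m)=x_i$ for all $i$, and then $t(m)=0$. One can exhibit this $m$ explicitly: $m = C(C(\cdots C(y,x_k)\cdots,x_1),x_0)$ for an arbitrary $y$, using surjectivity and $L(C(a,b))=a$, $R(C(a,b))=b$.

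So the map $P\mapsto t$ is a computable reduction, and a decision procedure for the zero problem of univariate terms would solve Hilbert's Tenth Problem, which is impossible.

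The only delicate step is the surjectivity claim, namely that every tuple is realized by some $m$. This needs $C$ to be a genuine bijection with $L,R$ its exact inverse components. For $C(x,y)=2^x(2y+1)-1$ this holds by unique factorization of $n+1$ into a power of $2$ times an odd number. I expect the remaining work to be a routine check that the composite expression uses only allowed operations.
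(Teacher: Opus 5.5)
Your proof is correct and follows essentially the same route as the paper. Both reduce Hilbert's Tenth Problem by substituting $R(m), RL(m), \dots, RL^k(m)$ for the variables, using a pairing function whose inverses are arithmetic terms. The differences are cosmetic: you use $(P_1 \dotdiv P_2) + (P_2 \dotdiv P_1)$, whereas the paper squares and takes the single truncated difference $E^2_+ \dotdiv E^2_-$; and you exhibit the witness $m$ explicitly by nested pairing, where the paper only cites the surjectivity remark.
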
 

\begin{proof}
    Consider an arbitrary polynomial Diophantine equation
    $$E(x_0, x_1, \dots, x_k) = 0,$$
    where $E(x_0, x_1, \dots, x_k) \in \mathbb Z[x_0, x_1, \dots, x_k]$. To ensure that the polynomial from the left-hand side takes always non-negative values, we square the left-hand side, so we consider the equation:
    $$E(x_0, x_1, \dots, x_k)^2 = 0.$$
    This equation can be rewritten
    $$E^2_+(x_0, x_1, \dots, x_k) - E^2_-(x_0, x_1, \dots, x_k) = 0,$$
    where both polynomials $E^2_+, E^2_-$ have positive coefficients only. As we know that $$\forall \,\vec x \in \mathbb N^{k+1}\,\, E^2_+(\vec x) \geq E^2_-(\vec x),$$ we may replace the algebraic subtraction by the truncated difference:
    $$E^2_+(x_0, x_1, \dots, x_k) \dotdiv E^2_-(x_0, x_1, \dots, x_k) = 0.$$
    Now we introduce a new variable $m$ ranging over the natural numbers and we consider the arithmetic term
    $$t(m) := E^2_+(R(m), RL(m), \dots, RL^k(m)) \dotdiv E^2_-(R(m), RL(m), \dots, RL^k(m)).$$
    It is clear that the equation $E(x_0, \dots, x_k) = 0$ has  solutions in $\mathbb N^{k+1}$ if and only if the equation $t(m) = 0$ has solutions in $\mathbb N$. As the former question is algorithmically unsolvable by Matiyasevich, so is also the later. 
\end{proof} 

The identity problem between arithmetic terms is the question whether for arithmetic terms $t_1(n)$ and $t_2(n)$ the following proposition is true:
$$\forall \,n \in \mathbb N\,\,\,\, t_1(n) = t_2(n).$$

\begin{cor}
    The identity problem between arithmetic terms is algorithmically undecidable.
\end{cor}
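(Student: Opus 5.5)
We reduce the zero-existence problem of the preceding Theorem to the complement of the identity problem. Given a univariate arithmetic term $t(m)$, we build a term $s(m)$ that takes only the values $0$ and $1$, with $s(m)=0$ exactly when $t(m)=0$. Then $t$ has a zero if and only if $s$ is \emph{not} identically equal to the constant term $1$. A decision procedure for the identity problem would therefore decide whether $t$ has a zero, which the Theorem forbids.

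The concrete normalization is
$$s(m) := 1 \dotdiv (1 \dotdiv t(m)).$$
If $t(m)=0$, then $1\dotdiv 0 = 1$ and $1\dotdiv 1 = 0$, so $s(m)=0$. If $t(m)\ge 1$, then $1\dotdiv t(m)=0$ and $s(m)=1$. This is built only from truncated subtraction and the constant $1$, both available in Mazzanti's basis, so $s$ is an arithmetic term in the single variable $m$. Take $t_1(m):=s(m)$ and $t_2(m):=1$; if a constant term is unwanted, use $t_2(m):=1 \dotdiv (m \dotdiv m)$ or $0^0$. Then
$$\forall m\,\, t_1(m)=t_2(m) \iff \neg\exists m\,\, t(m)=0.$$

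The map $t\mapsto (t_1,t_2)$ is clearly computable. If the identity problem were decidable, we could decide the existence of zeros of $t$ by negating the answer, contradicting the Theorem. Even simpler, one may take $t_1(m):=t(m)\cdot 0$ versus $t_2:=t$, but that does not work, since it tests $t\equiv 0$ rather than the existence of a zero. The normalization above is the right move.

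The only point needing care is that the identity problem is stated for terms in the same variable $n$, and that constants count as terms. Both are immediate, so there is no real obstacle beyond choosing the $0/1$ normalization correctly.
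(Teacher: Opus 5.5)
Your reduction is correct and is essentially the paper's: the paper notes that $t$ has no zero iff $\forall n\; 1 \dotdiv t(n) = 0$, i.e.\ it compares $1 \dotdiv t(n)$ with the constant $0$. Your $1 \dotdiv (1 \dotdiv t(m))$ compared with $1$ is the same test with one extra truncated subtraction, and the aside about $t(m)\cdot 0$ versus $t$ can simply be dropped.
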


\begin{proof}
    Consider again Hilbert's Tenth Problem in the form:
    $$\exists \,n \in \mathbb N\,\,\,\,t(n) = 0.$$
    This question has a negative answer if and only if:
    $$\forall \,n \in \mathbb N\,\,\,\, 1 \dotdiv t(n) = 0.$$
\end{proof} 

\section{Starting steps in arithmetic terms}\label{Sect.StartingSteps}

Consider the following functions: the binomial coefficient $\binom{a}{b}$, the greatest common divisor $\gcd(a,b)$, the $2$-adic valuation $\nu_2(n)$, the number of ones in binary representation $\HW(n)$, called also the Hamming Weight. They are represented by arithmetic terms, see \cite{mazzanti2002plainbases, marchenkov2007superposition, prunescushunia2024gcd}:
\begin{align*}
\binom{a}{b} &= \floor{\frac{(2^a+1)^a}{2^{ab}}} \bmod 2^a , \\
\gcd(a,b) &= \left ( \floor{\frac{5^{ ab(ab + a + b)}}{(5^{a^2 b} - 1)(5^{ab^2}-1)}} \bmod 5^{ab} \right ) - 1, \\
\nu_2(n) &= \floor{\frac{(\gcd(n, 2^n))^{n+1} \bmod (2^{n+1}-1)^2}{2^{n+1}-1}} , \\
\HW(n) &= \nu_2\left(\binom{2n}{n}\right) .
\end{align*}  

Mazzanti proved another $\gcd$-formula in \cite{mazzanti2002plainbases}. 

The exponent $5$ given in the present formula can be replaced by $2$, but the the formula does not work anymore in the case $(a,b) = (1,1)$. The exponent base $5$ can be replaced by the exponent basis $2$ also using Marchenkov's formula:
$$a^b = 2^{(ab + a + 1)b} \bmod (2^{ab + a + 1} - a),$$
see \cite{marchenkov1980superposition}, and the same can be done for the general exponentiation in the closed form for the binomial coefficient. Many occurrences of $a^b$, with various expressions for $a$ and $b$, can be handled this way. 

The term for $HW(n)$ displayed above is a consequence of Kummer's Theorem, see Matiyasevich \cite{matiyasevich1993hilbert}.

Consider the generalized geometric series:
\begin{align}
S_r (q, t) = \sum_{j=0}^{t} j^r q^j .
\end{align} 
Matiyasevich shows that for every $r \in \mathbb N$,  $S_r(q,t)$ is represented by an arithmetic term $G_r(q,t)$, see \cite{matiyasevich1993hilbert}, in Appendix. For example, 
$$G_0(q,t) = \frac{q^{t+1}-1}{(q-1)},$$
$$G_1(q,t) = \frac{tq^{t+2} - (t+1)q^{t+1} +1}{(q-1)^2} ,$$
$$G_2(q,t) = \dfrac{t^2 q^{t+3} - (2 t^2 + 2t - 1)q^{t+2} + (t+1)^2 q^{t+1} - q^2 - q}{( q - 1 )^3},$$
and so on. These terms are called generalized geometric series. The family $\left (G_r(q,t) \right )$ is not uniform in $r$, and their sizes are unbounded. Nevertheless, there exists also an arithmetic term $G(r, q, t)$ such that for all $r, q, t \in \mathbb N$,
$$G(r, q, t) = G_r(q,t).$$
This fact can be proved ad hoc, but it will follow also from the results of the present article. Nevertheless, such a term is very complicated, and so is not useful in experiments. On the other hand, it is proved in \cite{prunescusauras2025manyfunctions} that every Kalmar elementary function can be expressed using the functions $G_r(q,t)$ with $r \in \{0, 1, 2\}$ and the function $HW(n)$.

\section{The solution-counting term}\label{Sect.Hypercube}

In this section we explain Mazzanti's construction of a term that counts the natural number solutions of an exponential Diophantine equation inside a given hypercube of edge-length $t-1$. This method is sometimes called {\bf the hypercube method}, see \cite{mazzanti2002plainbases}. 

An exponential expression
$$E(x_1, x_2, \dots, x_n) $$ 
is called {\bf simple} if it is a sum of simple monomials. A simple monomial has the form:
$$a a_1^{x_1}\dots a_n^{x_n} x_1^{b_1} \dots x_n^{b_n}$$
where $x_1, \dots, x_n$ are unknowns, $a, a_1, \dots, a_n, b_1, \dots, b_n$ are integers,  $a_1, \dots, a_n \geq 1$  and $b_1, \dots, b_n \geq 0$. We are interested in counting the integer solutions $(x_1, \dots, x_n) \in [0,t-1]^n$ of the equation
$$E(x_1, x_2, \dots, x_n) = 0$$
for some $t \in \mathbb N$. We will see that the number of solutions
$$F(t, \vec a) = | \{(x_1, \dots, x_n) \in [0,t-1]^n \, |\, E(x_1, \dots, x_n) = 0 \}|$$
is expressible by an arithmetic term in $t$ and $\vec a$. The parameter vector $\vec a$ consists of the monomial coefficients $a$ and of the exponentiation bases $a_i$ of the various simple monomials occurring in $E(x_1, \dots, x_n)$. All these parameters occur explicitly in the term $F(t, \vec a)$. The shape of the term depends on the polynomial exponents $b_1, \dots, b_n$ from various monomials, but these integers do not occur explicitly in the arithmetic term $F(t, \vec a)$. 

Without lost of generality, we may suppose that $E(\vec x) \geq 0$ for all $\vec x \in {0, \dots, t-1}^n$, because we can replace the exponential expression $E(x_1, \dots, x_n)$ with its square. In practical applications, the expression $E$ often arises as a sum of squares. We also choose a $w \in \mathbb N$ such that $E(\vec x) \leq 2^w$ for all $\vec x \in \{0, \dots, t-1\}^n$. 

Given two integers $a$ and $w$ such that $0 \leq a < 2^w$, let 
$$\delta(a,w) = (2^w - 1)(2^w - a + 1) =  2^{2 w} - 2^w a + a - 1 .$$
Recall that $\HW(x)$ means the number of digits equal one in the base-two representation of $x$. The function $\HW(x)$ is expressed by an arithmetic term. It is easy to see that:
$$\HW(\delta(a, w)) = \begin{cases}
    2w, & a=0, \\
    w, & a \neq 0.
\end{cases}
$$ 
Let $ v $ denote the function that maps each point $ \vec{x} \in \{ 0 , \dots , t  - 1 \}^n $ into the number $ x_1 + x_2 t  + \dots + x_n t^{n - 1} $.

Observe that $ v $ enumerates the points of $ \{ 0 , \dots , t - 1 \}^n $ from $ 0 $ to $ t^n - 1 $.

Let $$ M ( t ) = \sum_{\vec{x} \in \{ 0 , \dots , t - 1 \}^n} 2^{2 w v ( \vec{x} )} \delta ( E ( \vec{x} ) , w) , $$ which is well-defined because $ 0 \leq E ( \vec{x} ) < 2^{w} $ for every $ \vec{x} \in \{ 0 , \dots , t - 1 \}^k $, and let $ d ( t) $ denote the cardinality of the set $ \{ \vec{x} \in \{ 0 , \dots , t - 1 \}^n : E ( \vec{x} ) = 0 \} $.

We observe that the base-two representation of the number $M(t)$ is the concatenation of the base-two representations of the numbers $\delta(E(x), w)$. It follows that:
$$d(t) = \left \lfloor \frac{ \HW(M(t))}{w} \right \rfloor -t^n,$$
and this would be an arithmetic term if $M(t)$ would be represented by an arithmetic term. To this end, we recall that the arithmetic-geometric sums $G_u(v, t)$ are arithmetic terms, and we observe that:

 $$ \sum_{\vec{x} \in \{ 0 , \dots , t - 1 \}^n} x_1^{b_1} a_1^{x_1} \dots x_n^{b_n} a_n^{x_n} = G_{b_1} ( a_1 , t - 1 ) \dots G_{b_n} ( a_n , t - 1 ) . $$ 

As the in the definition of $\delta(a,w)$, the quantity $a$ participates as an affine variable, the sum defining $M(t)$ can be reformulated such that we can apply the aforementioned identity separately for every simple exponential monomial. So we find out that the contribution of the free (constant) monomial $\varepsilon$ is:
$$ \mathcal{C} ( \varepsilon)  = ( 2^{w } - \varepsilon  + 1 ) \left \lfloor \frac {( 2^{2 w  {t }^n } - 1 ) }{ ( 2^{w } + 1 )} \right \rfloor , $$ 
while the contribution of the simple exponential  monomial $m = a \, a_1^{x_1}\dots a_n^{x_n} x_1^{b_1} \dots x_n^{b_n}$ is
$$  \mathcal{A} ( m  ) = - ( 2^{w} - 1 ) \, a \, G_{b_1} ( 2^{ 2 w } a_1, t  - 1 ) \dots G_{b_n} ( 2^{ 2 w  {t }^{n - 1}} a_n, t  -1 ) . $$ 

This construction will be applied in the following way. Suppose that we know that a function of interest $f(\vec m)$ is equal with the number of solutions of a parametric exponential Diophantine simple non-negative equation $E(\vec m, \vec x) = 0$ in a hypercube $\{0, \dots, t(\vec m) - 1\}^n$. Suppose that the dependency $t(\vec m)$ can be expressed as an arithmetic term and that the way in which every simple exponential monomial $m = a \, a_1^{x_1}\dots a_n^{x_n} x_1^{b_1}$ depends on $\vec m$  is that $a = a(\vec m)$, $a_1 = a_1(\vec m)$, $\dots$, $a_n = a_n(\vec m)$ depend on $\vec m$ as arithmetic terms, while the polynomial exponents $b_1$, $\dots$, $b_m$ are always constants. In this case one can always find an arithmetic term $w(\vec m)$ such that for all $\vec x \in \{0, \dots, t(\vec m) - 1\}$, $0 \leq E(\vec m, \vec x) < 2^{w(\vec m)}$. It follows that the function of interest $f(\vec m)$ will be expressed by an arithmetic term:
$$f(\vec m) = d(t(\vec m), w(\vec m)) = \left \lfloor \frac{ \HW(M(t(\vec m), w(\vec m))}{w(\vec m)} \right \rfloor -t(\vec m)^n,$$

\section{Represent general recurrent sequences}\label{Sect.General}

In this section we exemplify a general method to find the general closed form expressing the $n$-th term of a recurrent sequence. 

\begin{theorem}\label{metatheorem}
The following data is given:
\begin{enumerate}
\item A general recurrence rule of the shape:
$$a(n+k) = F(n, a(n), \dots, a(n+k-1)),$$
where $F(n, x_0, \dots, x_{k-1})$ is a known arithmetic term.
\item Initial values $a(0), \dots, a(k-1) \in \mathbb N$.
\item A function $A(n, x_0, \dots, x_{k-1})$ such that for all $n \in \mathbb N$,  and for all $j$ with $0 \leq j \leq n $, one has:
$$a(j) < A(j, a(0), \dots, a(k-1)).$$ 
\end{enumerate}
Then one can always construct a term $T(n, x_0, \dots, x_{k-1})$, in the language $L_A = \{+, \dot{-}, \cdot, \lfloor x/y \rfloor, 2^x\} \cup \{A(n, x_0, \dots, x_{k-1})\}$,  such that for all $n \in \mathbb N$,
$$a(n) = T(n, a(0), \dots, a(k-1)).$$
\end{theorem}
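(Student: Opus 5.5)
Encode the whole history of the sequence up to step $n$ as one natural number, characterise that number by a condition that an arithmetic term can check, and then extract $a(n)$ from it by a division and a remainder.

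\textbf{Block width.} Put $B = A(n, a(0), \dots, a(k-1))$ (plus the constant term $1$ if needed). Hypothesis 3 gives $a(j) < B$ for every $0 \le j \le n$, and I assume $A$ may be taken monotone in $j$. Choose the block width $s = B$, or a bit length $s$ with $2^s > B$. The base is then $2^s$, and every $a(j)$ fits into one base-$2^s$ digit.

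\textbf{History code.} Define
$$c = \sum_{j=0}^{n} a(j)\, 2^{sj}.$$
Its digits are $\lfloor c/2^{sj}\rfloor \bmod 2^s$. The recurrence becomes one condition on $c$. For each $j \le n-k$, digit $j+k$ of $c$ must equal $F(j, \text{digits } j,\dots,j+k-1)$, and the lowest $k$ digits must be the initial values. Exactly one $c < 2^{s(n+1)}$ satisfies this condition. Once $c$ is known,
$$a(n) = \lfloor c / 2^{sn}\rfloor \bmod 2^s,$$
which is an arithmetic term in $c$ and $s$.

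\textbf{Producing $c$ without an unbounded quantifier.} Terms have no search operator. I would combine the bounded-sum idea of \cref{Sect.Hypercube} with a trick that turns a unique witness into a count.

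\begin{enumerate}
\item Let $\chi(c,n,\vec x)$ be the term that equals $1$ if $c$ encodes a valid history and $0$ otherwise. It can be built as $1 \dotdiv D$, where $D$ is the finite sum over $j \le n-k$ of the discrepancies $|\text{digit}_{j+k}(c) - F(\dots)|$. The obstacle is that $D$ is a sum over a variable range $j \le n$, which a fixed term cannot write directly.
\item Remove the variable-length check by packing. Evaluate all $n$ local checks at once by digit-parallel arithmetic on $c$: shifted copies $\lfloor c/2^{si}\rfloor$ for $i<k$ give, in each block $j$, the tuple $(a(j),\dots,a(j+k-1))$. This works only if $F$ can be applied blockwise. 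For general $F$ that is false, so instead treat ``$c$ is valid'' as a bounded statement in the elementary (Kalmar) sense.
\item Use the counting form of \cref{Sect.Hypercube}. Each $a(j)$ is determined, so
$$a(n) = \#\{\, y < B : \exists\, c \text{ valid},\ \text{digit}_n(c) > y \,\}.$$
This is again a bounded count, whose enumeration can be carried out by the $\HW$ concatenation trick over a range of size $2^{s(n+1)}$.
\end{enumerate}

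\textbf{Concrete route.} I would form
$$M = \sum_{c < 2^{s(n+1)}} 2^{Wc}\, \chi(c)\, c,$$
written as a single term through generalized geometric series, and then read off the unique nonzero block. The concrete plan therefore has three steps:
\begin{itemize}
\item (i) express validity of $c$ as $E(c)=0$ for a simple exponential-Diophantine $E$ with auxiliary variables, using the Matiyasevich/Mazzanti arithmetisation of bounded universal quantifiers;
\item (ii) apply the hypercube count of \cref{Sect.Hypercube}, weighting by the digit of interest, so that the count yields $a(n)$;
\item (iii) bound $w$ by an expression in $B$, $n$ and $s$.
\end{itemize}

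The main obstacle is step (i). The bounded universal quantifier $\forall j \le n$ over applications of an arbitrary term $F$ must be turned into a simple exponential equation of fixed shape. I would first try to reduce it by induction on the construction of $F$, showing that each basic operation ($+$, $\dotdiv$, $\cdot$, $\lfloor x/y\rfloor$, $2^x$) can be applied blockwise to packed vectors by explicit digit-parallel terms. This yields a packed evaluator $\hat F(c)$, and the validity check reduces to the single equality $\lfloor c/2^{sk}\rfloor = \hat F(c) \bmod 2^{s(n+1-k)}$. That equality then feeds the hypercube counting.
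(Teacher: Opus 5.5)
Your overall architecture is the paper's. You code $a(0),\dots,a(n)$ as one number, note that the valid code is unique, turn that unique witness into a solution count via the hypercube method, and read off $a(n)$ by a division. Your monotonicity assumption on $A$ is also what the paper uses implicitly when it codes in base $A(n,\vec y)$.

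The gap is step (i), which you flag as the main obstacle and leave unresolved. The packed evaluator $\hat F$ is only asserted, and that is where all the difficulty lies:
\begin{itemize}
\item Digit-parallel terms are routine for $+$, and for $\dotdiv$ with guard bits.
\item The product of two packed numbers is a convolution, not a blockwise product.
\item There is no evident digit-parallel term for blockwise $\lfloor x/y\rfloor$ or blockwise $2^x$.
\item The block width would have to hold every intermediate value of $F$, not just numbers below $B$.
\end{itemize}
Falling back on a general elimination of bounded universal quantifiers (Davis--Putnam--Robinson) does give an exponential Diophantine equation. But the hypercube method counts whole solution tuples in a box of term-bounded size. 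So your count in (ii) equals $a(n)$ only if the auxiliary unknowns are bounded by terms and uniquely determined (or their multiplicity is otherwise controlled). Your plan addresses neither point.

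The missing idea is that the universal quantifier never has to be eliminated. Your item 1 already has the right object, a sum over $j\le n-k$ of local checks. The hypercube method is exactly the device that turns such a variable-range sum into a fixed term, once the index $j$ is made an \emph{unknown}. Using \cref{lemma:fromtermtoeq} and \cref{lemma.coding}, the paper writes a sum-of-squares equation $E(n,x,\vec y)(j,\vec z,\vec\lambda)=0$. It states that $x<A^{n+1}$, that the first $k$ digits of $x$ are $\vec y$, that $z_0,\dots,z_k$ are digits $j,\dots,j+k$ of $x$, and that $z_k=F(j,z_0,\dots,z_{k-1})$. All auxiliary unknowns are uniquely determined and bounded by terms.

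For fixed $x$, each admissible $j$ then contributes at most one solution. So the hypercube count $G(n,x,\vec y)$ is the number of positions at which the recurrence holds. ``$x$ is valid'' becomes the single term equation $G(n,x,\vec y)=n-k+1$: ``for all $j$'' is replaced by ``count $=$ maximum''. \Cref{lemma:fromtermtoeq} turns this term equation back into an equation $E_1(x,\vec\mu)=0$ with unique solution $(C,\vec\mu)$. Substituting $x=\omega_1+\omega_2+1$ makes the number of solutions equal to $C$. A second hypercube count gives $H(n,\vec y)=C$, and $a(n)=\lfloor H/A^n\rfloor$.

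Alternatively, your remark in item 2 would close the gap at once if you invoked Mazzanti's theorem, which the paper takes as known. Let $h(n,\vec y,B)$ be digit $n$ of the least $c<B^{n+1}$ whose base-$B$ digits satisfy the initial values and the recurrence. By bounded quantification and bounded minimisation, $h$ is Kalmar elementary, hence a term, and $T=h(n,\vec y,A(n,\vec y))$. The proposal, however, does not take that step.
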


The function $A(n, x_0, \dots, x_{k-1})$ is not always representable by an arithmetic term. This can be seen easily for the sequence:
$$a(0) = 1\,\,\,\, \wedge\,\,\,\, \forall\,n \geq 1 \,\,\,\,a(n+1) = 2^{a(n)}.$$
It follows immediately:
\begin{cor}
    If the sequence $(a(n))$ is recurrent of order $k$,  the recurrence is  an arithmetic term $$ F(n, a(n), \dots, a(n+k-1)),$$ and the sequence $(a(n))$ is bounded by an arithmetic term $$A(n, a(0), \dots, a(k-1)),$$ then there exists an arithmetic term $$T(j, a(0), \dots, a(k-1))$$ expressing the member $a(j)$. 
\end{cor}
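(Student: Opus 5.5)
The plan is to read the corollary as a direct specialization of \cref{metatheorem}, obtained by eliminating the extra symbol $A$ from the language $L_A$. The data needed by \cref{metatheorem} are already present. The recurrence rule $a(n+k) = F(n, a(n), \dots, a(n+k-1))$ has an arithmetic term $F$, and the initial values $a(0), \dots, a(k-1)$ are given.

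The only point needing care is the bound, because the corollary says ``bounded by'' while \cref{metatheorem} asks for the strict inequality $a(j) < A(j, a(0), \dots, a(k-1))$. If the hypothesis is only $a(j) \leq A(j, a(0), \dots, a(k-1))$, I will replace $A$ by $A' := A + 1$. This is still an arithmetic term, since addition is one of the basic operations, and it satisfies the strict bound for every $j$, so in particular for all $0 \leq j \leq n$.

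Next I apply \cref{metatheorem} with $F$ and $A'$. It yields a term $T(n, x_0, \dots, x_{k-1})$ in the language $L_{A'} = \{+, \dot{-}, \cdot, \lfloor x/y \rfloor, 2^x\} \cup \{A'\}$ with $a(n) = T(n, a(0), \dots, a(k-1))$ for all $n$. I then check that every symbol of $L_{A'}$ is an arithmetic term. The operations $+$, $\dot{-}$, $\cdot$ and $\lfloor x/y \rfloor$ are basic operations. The function $2^x$ is the special case $x^y$ with base the constant $2$. The symbol $A'$ is an arithmetic term by the hypothesis of the corollary.

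Finally I replace, by induction on the construction of $T$, every occurrence of the symbol $A'(s_0, s_1, \dots, s_k)$, whose arguments are subterms, by the arithmetic term for $A'$ with the (already translated) subterms substituted for its variables. Arithmetic terms are closed under substitution, being finite compositions of the basic operations. Hence the resulting expression $T^*(j, a(0), \dots, a(k-1))$ is an arithmetic term, and it computes the same function as $T$, which gives the claimed representation of $a(j)$.

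I expect no real obstacle here beyond the strict-versus-non-strict bound mismatch handled above. The substantive work lies in \cref{metatheorem} itself, presumably via the hypercube method of \cref{Sect.Hypercube}, which is assumed.
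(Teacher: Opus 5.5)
Your proposal is correct and is exactly the paper's route: the paper states the corollary as following immediately from \cref{metatheorem}, since once $A$ is itself an arithmetic term, substituting it into the $L_A$-term $T$ yields an ordinary arithmetic term. Your $A+1$ adjustment for the strict inequality is a harmless refinement that the paper leaves implicit. One caveat: the proof of \cref{metatheorem} actually uses the single coding base $A(n,\cdot)$ for all of $a(0),\dots,a(n)$, so a pointwise bound is not quite enough. You can make your argument robust to this by replacing $A$ with $1+\sum_{i\le n}A(i,\cdot)$, which is Kalmar elementary and hence again an arithmetic term.
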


We will effectively apply the following Lemma:

\begin{lemma}\label{lemma:fromtermtoeq}
    Let $B(\vec x)$ be any arithmetic term. Then the relation:
    $$B(\vec x) = y$$
    is equivalent with a simple exponential Diophantine relation:
    $$\exists \, \vec z\,\, E_B(\vec x, y, \vec \lambda) = 0.$$
    Here:
\begin{enumerate}
    \item $E_B(\vec x, y, \vec \lambda)$ is a sum of squares.
    \item If $B(\vec x) = y$ then there is a unique $\vec \lambda$ such that $E(\vec x, y, \vec \lambda) = 0$.
    \item There are arithmetic terms $L_i(\vec x, y)$ such that  $\lambda_i < L_i(\vec x, y)$.
\end{enumerate}
\end{lemma}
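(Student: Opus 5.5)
The plan is structural induction on the term $B(\vec x)$, introducing one auxiliary variable for every intermediate subterm, in the spirit of the usual proof that graphs of elementary functions are exponential Diophantine. Concretely, write $B$ as a finite sequence of operations $u_1,\dots,u_s$, where each $u_j$ is either a variable $x_i$, a constant, or $u_p \circ u_q$ with $p,q<j$ and $\circ \in \{+, \dotdiv, \cdot, \lfloor\cdot/\cdot\rfloor, \bmod, \text{exp}\}$, and $u_s = B$. For each $j$ we introduce a new unknown $\lambda_j$ meant to equal the value of $u_j$, together with a few witness unknowns. We then write one equation $e_j = 0$ expressing that $\lambda_j$ is the correct value. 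The final equation is
$$E_B = \sum_j e_j^2 + (\lambda_s - y)^2,$$
which is a sum of squares, so item 1 holds by construction.

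The local equations for each operation are as follows.
\begin{itemize}
\item For $+$ and $\cdot$: $\lambda_j - \lambda_p - \lambda_q$ and $\lambda_j - \lambda_p\lambda_q$.
\item For $\lambda_j = \lambda_p \dotdiv \lambda_q$: introduce a sign bit $\sigma \in\{0,1\}$, forced by $\sigma^2-\sigma=0$, and slack variables, via $\lambda_p + \mu = \lambda_q + \lambda_j$, $\sigma\lambda_j = 0$, $(1-\sigma)\mu = 0$. To keep the construction uniform, it is simpler to encode the case split as: either $\lambda_j = \lambda_p - \lambda_q$ (so $\lambda_p \ge \lambda_q$), or $\lambda_j = 0$ and $\lambda_q = \lambda_p + 1 + \rho$.
\item For division with remainder, $\lambda_p = \lambda_j\lambda_q + r$ with $r + 1 + \rho = \lambda_q$. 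The case $\lambda_q = 0$ is handled by the same sign-bit trick and the paper's convention for $\lfloor x/0\rfloor$. The remainder $\bmod$ is treated identically.
\item For exponentiation $\lambda_j = \lambda_p^{\lambda_q}$: this is the only delicate step, since simple exponential expressions only allow constant or parametric bases $a_i^{x_i}$, not an unknown raised to an unknown.
\end{itemize}

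For exponentiation I would first try Marchenkov's formula quoted above, $a^b = 2^{(ab+a+1)b} \bmod (2^{ab+a+1}-a)$. This reduces general exponentiation to exponentials with base $2$, which are admissible as monomials $2^{\lambda}$, plus multiplication and mod, both already handled. One has to check that $2^{c\lambda}$ with $c$ itself an unknown is fine. It is not directly fine, so I would instead introduce $\eta = (ab+a+1)$ and $\theta = \eta b$ as new unknowns with polynomial equations, and then use $2^{\theta}$ and $2^{\eta}$ as separate simple monomials. If the paper's language only has $2^x$ (as in $L_A$), then this step disappears altogether.

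Uniqueness (item 2) follows because every auxiliary unknown is functionally determined by earlier ones: each value $\lambda_j$ is forced, quotient/remainder pairs are unique, and sign bits and slack variables are determined by the order relation. I must avoid witnesses like "some $\rho$ with $\rho < \dots$" that are not unique; each slack above is a forced difference, so they are unique.

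For the bounds (item 3), I bound each $\lambda_j$ by the arithmetic term obtained from $u_j$ itself, evaluated at $\vec x$; for instance $\lambda_j < u_j(\vec x) + 1$. Each slack or remainder is bounded by the corresponding operand term plus one, and each sign bit by $2$. These are arithmetic terms in $\vec x$ (and trivially in $y$). The main obstacle I expect is the exponentiation step combined with uniqueness and admissibility of the monomial shape. To be safe, I would also bound the intermediate Marchenkov quantities $2^{\theta}$ explicitly by terms, so that the later hypercube application sees a valid $t(\vec m)$ and $w(\vec m)$.
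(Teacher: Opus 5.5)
Your proposal is correct, and its skeleton matches the paper's proof: one unknown per subterm, a sum of squared local equations, uniqueness because every witness is a function of the subterm values, and bounds of the form $\lambda_j < u_j(\vec x)+1$.

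The routes differ in which operations are treated. The paper first invokes the minimal-basis theorem of Prunescu--Sauras--Shunia to replace $B$ by an equivalent term over $\{x+y,\ x \bmod y,\ 2^x\}$. Only three cases then remain: $(2^{y_1}-y)^2$, $(y-y_1-y_2)^2$, and, for $\bmod$, a product of two sums of squares separating $B_2(\vec x)\neq 0$ from $B_2(\vec x)=0$. In effect the paper disposes of your exponentiation step globally, by the observation you make at the end.

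You instead work on $B$ as given, over $+,\dotdiv,\cdot,\lfloor\cdot/\cdot\rfloor,\bmod,x^y$. You eliminate unknown bases with Marchenkov's identity, via $\eta=ab+a+1$, $\theta=\eta b$, $2^\theta=Q(2^\eta-a)+\lambda_j$ and $\lambda_j+1+\rho=2^\eta-a$. This is valid for all $a,b$, including $0^0=1$, and since $2^\eta-a\geq 2$ no zero-modulus case arises. The paper's version is shorter but rests on a nontrivial external theorem and on rewriting the term first. Yours is self-contained, at the price of more cases.

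In those extra cases, every witness unused in one branch of a product-encoded case split must be pinned to $0$ in that branch; the paper does this with $y_3^2+q^2$. Otherwise items 2 and 3 fail. Concretely, your sign-bit encoding of $\dotdiv$ has two solutions ($\sigma=0$ and $\sigma=1$) when $\lambda_p=\lambda_q$. In your second encoding, the branch $\lambda_j=\lambda_p-\lambda_q$ must also contain $\rho=0$, since otherwise $\rho$ is free and unbounded there.
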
 

\begin{proof}
   The proof is done by induction on the inductive definition of arithmetic terms. If the term $B(\vec x) = x_i$ is a projection, then the equation will be:
   $$(x_i - y)^2 =0. $$
   Suppose that two relations $B_i(\vec x) = y_i$, where $i=1$ or $i=2$, have been already proven equivalent with exponential Diophantine equations:
   $$E_{B_i}(\vec x, y_i, \vec \lambda_i) = 0$$
   which fulfills all properties given in the statement. 

   In [Prunescu, Sauras, Shunia, Basis paper] it was shown that all Kalmar elementary functions can be represented using only the functions $2^x$, $x+y$ and $x \bmod y$, so it suffices to analyze these steps. 

   {\it Case 1:} The relation has the shape:
   $$2^{B_1(\vec x)} = y.$$ 
   We write the equation:
   $$(2^{y_1} - y)^2 + E_{B_1}(\vec x, y_1, \vec \lambda_1) = 0,$$
   which we denote:
   $$E_{2^{B_1}}(\vec x, y, \vec \lambda) = 0$$
   and the tuple $\vec \lambda = (\vec \lambda_1, y_1)$. We observe that $y_1 < B_1(\vec x) + 1 = L_1(\vec x, y)$. 

   {\it Case 2:} The relation has the shape:
   $$B_1(\vec x) + B_2(\vec x) = y.$$
   We write the equation:
   $$(y - y_1 - y_2)^2 + E_{B_1}(\vec x, y_1, \vec \lambda_1) + E_{B_2}(\vec x, y_2, \vec \lambda_2) = 0, $$
   which we denote:
   $$E_{B_1 + B_2}(\vec x, y, \vec \lambda) = 0$$
   and the tuple $\vec \lambda = (\vec \lambda_1, y_1, \vec \lambda_2, y_2)$. We observe that $y_i < B_i(\vec x) + 1 = L_i(\vec x, y)$.

   {\it Case 3:} The relation has the shape:
   $$B_1(\vec x) \bmod B_2(\vec x) = y.$$
   We write the equation:
   $$[(y_1 - q y_2 - y)^2 + (y_2 - y_3 - y - 1)^2] [y_2^2 + y_3^2 + (y-y_1)^2 + q^2] + E_{B_1}(\vec x, y_1, \vec \lambda_1) + E_{B_2}(\vec x, y_2, \vec \lambda_2) = 0,$$
    which we denote:
   $$E_{B_1 \bmod B_2}(\vec x, y, \vec \lambda) = 0$$
   and the tuple $\vec z = (\vec z_1, y_1, \vec z_2, y_2, y_3, q) $.  The first factor can be satisfied only if $B_2(\vec x) \neq 0$, and says that $y$ is indeed the remainder of the division of $B_1(\vec x)$ by $B_2(\vec x)$. The second factor can be satisfied only if $B_2(\vec x) = 0$. It forces $y_1 \bmod 0 = y_1$ according to the generally accepted convention. But any other convention can be expressed by an exponential Diophantine equation as well.  

   We observe again that $y_1 < L_1(\vec x, y) $, $y_2 < L_2(\vec x, y)$, $y_3 < L_2(\vec x, y)$ and $q < L_1(\vec x, y)$. 
\end{proof} 

We also make use of the positional coding. The sequence of natural numbers 
$$a_0, a_1, \dots , a_n$$
satisfying $a_k < A$ for all $k \in \{0, 1, \dots, n\}$, is uniquely encoded by
$$x = a_0 + a_1 A + \dots + a_n A^n.$$
Let $\textrm{Code}(x, A, n)$ mean that $x$ encodes a sequence bounded by $A$, of length $n+1$. Then:
$$\textrm{Code}(x, A, n) \longleftrightarrow x < A^{n+1}.$$
If we denote by $\pi(x, A, j, n)$ the element $a_k$ of the sequence, then:
$$a = \pi(x, A, j, n) \longleftrightarrow x = \lambda_1 + a A^j + \lambda_2 A^{j+1} \wedge \,\, \lambda_1 < A^j \wedge a < A \wedge j \leq n. $$ 

\begin{lemma}\label{lemma.coding}
    The relations $\textrm{Code}(x, A, n)$ and $a = \pi(x, A, j, n)$ have simple exponential Diophantine definitions 
    $$E_{\textrm{Code}}(x, A, n, \lambda) = 0,$$
    $$E_\pi(x, A, j, n, a, \vec \lambda) = 0.$$
    Here:
    \begin{enumerate}
        \item Both expressions $E_{\textrm{Code}}$ and $E_\pi$ are sums of squares.
        \item If  $\textrm{Code}(x, A, n)$, respectively $a = \pi(x, A, j, n)$ are true, then there exist unique tuples $\vec \lambda$ satisfying the equations.
        \item  There are arithmetic terms $L_i(x, A, j, n, a)$ such that $\lambda_i < L_i(x, A, j, n, a)$.
    \end{enumerate}
\end{lemma}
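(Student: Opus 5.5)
We write each relation as a sum of squares. Inequalities are turned into equations with slack variables, and exponentiations $A^j$ are handled as in \cref{lemma:fromtermtoeq}, by introducing auxiliary variables equal to them.

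\textbf{The relation $\textrm{Code}$.} The condition $x < A^{n+1}$ holds iff there is a unique $\lambda \geq 0$ with $x + \lambda + 1 = A^{n+1}$. The right-hand side $A^{n+1}$ is not a simple monomial in the sense of \cref{Sect.Hypercube}, because the base $A$ is itself a variable. We therefore apply \cref{lemma:fromtermtoeq} to the arithmetic term $B(A,n) = A^{n+1}$, written for instance via Marchenkov's formula with base $2$. This yields a sum of squares $E_B(A,n,u,\vec\mu)=0$ that defines $u = A^{n+1}$, with unique witnesses bounded by arithmetic terms. We set
$$E_{\textrm{Code}} = (x+\lambda+1-u)^2 + E_B(A,n,u,\vec\mu).$$
Uniqueness holds because $u$ and $\vec\mu$ are determined by $(A,n)$, and then $\lambda = u - x - 1$ is determined. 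For the bounds, take $\lambda < A^{n+1}$ and $u < A^{n+1}+1$, and for $\vec \mu$ use the bounds supplied by \cref{lemma:fromtermtoeq}. All of these are arithmetic terms in $(x,A,n)$.

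\textbf{The relation $\pi$.} We introduce $u = A^j$ together with its witnesses from \cref{lemma:fromtermtoeq}. We also introduce slack variables $\sigma_1,\sigma_2,\sigma_3$ for the three inequalities $\lambda_1 < u$, $a < A$ and $j \leq n$. Then
\begin{align*}
E_\pi ={}& (x - \lambda_1 - a u - \lambda_2 A u)^2 + (u - \lambda_1 - \sigma_1 - 1)^2 + (A - a - \sigma_2 - 1)^2 \\
&+ (n - j - \sigma_3)^2 + E_{A^j}(A,j,u,\vec\mu).
\end{align*}
After expansion every summand is a simple polynomial expression, since products such as $\lambda_2 A u$ are ordinary monomials. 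The expression is a sum of squares as required.

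\textbf{Uniqueness.} Given that $a=\pi(x,A,j,n)$ holds, $u$ and $\vec\mu$ are fixed by $(A,j)$. The equation $x = \lambda_1 + a u + \lambda_2 A u$ with $\lambda_1 < u$ and $a < A$ is exactly division with remainder: $\lambda_1 = x \bmod u$ and $\lambda_2 = \lfloor x/(Au)\rfloor$. These are unique, and then each slack $\sigma_i$ is determined.

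\textbf{Bounds.} We have $\lambda_1,\lambda_2 \leq x$, $u \leq A^j$, $\sigma_1 < A^j$, $\sigma_2 < A$ and $\sigma_3 \leq n$. So we may take, for example, $L = x+1$, $A^j+1$, $A+1$ and $n+1$ respectively, together with the bounds inherited from \cref{lemma:fromtermtoeq}.

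\textbf{Main obstacle.} The step needing care is the treatment of the variable-base power $A^j$. The hypercube method of \cref{Sect.Hypercube} only allows exponentials $a_i^{x_i}$ whose bases are parameters, not unknowns. I would therefore check that the relation $u = A^j$, obtained from \cref{lemma:fromtermtoeq}, uses only exponentials with constant base $2$, for instance through Marchenkov's identity $A^j = 2^{(jA+A+1)j} \bmod (2^{jA+A+1}-A)$ decomposed with \textit{Case 3}. I would also check that its witnesses are unique and term-bounded. Once this is verified, everything else is routine bookkeeping.
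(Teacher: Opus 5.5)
Your proposal is correct, and its skeleton matches the paper's proof. Both use slack variables for the strict inequalities, the decomposition $x=\lambda_1+aA^j+\lambda_2A^{j+1}$ written as a sum of squares, uniqueness by division with remainder, and essentially the same bounds $A^{n+1}$, $x+1$, $A^j$, $A$, $n+1$.

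The only real difference is how you treat the powers of $A$. The paper writes $A^{n+1}$ and $A^j$ directly into the equations. In the only place the lemma is used (\cref{metatheorem}), $x$, $n$ and $A=A(n,\vec y)$ are parameters, while $j$, $a$ and $\vec\lambda$ are unknowns. The hypercube method of \cref{Sect.Hypercube} explicitly allows coefficients and exponential bases that are arithmetic terms in the parameters. So $A^{n+1}$ is just a coefficient, $A^j$ is a simple monomial with a parametric base, and your ``main obstacle'' never arises.

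Your detour through \cref{lemma:fromtermtoeq} and Marchenkov's identity is nevertheless sound. The induction in \cref{lemma:fromtermtoeq} only ever produces exponentials of the form $2^{y_1}$, so the base-$2$ check you postpone is automatic. You only need to handle the products and the subtraction $2^{jA+A+1}-A$, either by the paper's reduction to the basis $\langle x+y,\,x\bmod y,\,2^x\rangle$ or by two easy extra cases. What this buys you is a version of the lemma that stays valid even when $A$ is an unknown. The price is several extra unknowns and witness bounds of order $2^{(jA+A+1)j}$, which inflate the hypercube term with no gain in the paper's setting.

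One point in your favour: your slack equation $(A-a-\sigma_2-1)^2$ is the correct one. The paper prints $(A-\lambda_4-1)^2$, which omits $a$ and so does not enforce $a<A$. As printed, the equation also accepts non-digit values such as $a=\lfloor x/A^j\rfloor$ with $\lambda_2=0$ whenever $x\geq A^{j+1}$. That would break the solution count used in \cref{metatheorem}.
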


\begin{proof} All these conditions are exponential Diophantine. Indeed, the first can be written:
$$\exists \, \lambda\,\,\,\,(A^{n+1} - x - \lambda - 1)^2 = 0,$$
$$E_{\textrm{Code}}(x, A, n, a, \lambda) = 0.$$
Here $\lambda < A^{n+1} = L(A, n)$. For the second relation:
$$ \exists \, \vec \lambda\,\,\,\,( x - \lambda_1 - a A^j - \lambda_2 A^{j+1})^2 + (A^j - \lambda_1 - \lambda_3 - 1)^2 + (A - \lambda_4 -1)^2 + (n-j-\lambda_5)^2 = 0,   $$
$$E_\pi(x, A, j, n, a, \vec \lambda) = 0.$$
We observe that $\lambda_1 < A^j = L_1(A, j)$, $\lambda_2 < x+1 = L_2(x)$, $\lambda_3 < A^j = L_3(A, j) $, $\lambda_4 < A = L_4(A)$, 
$\lambda_5 < n+1 = L_5(n)$. 

\begin{proof} (of \cref{metatheorem}) The natural number $k$, meaning the order of the recurrence relation, will be treated as a constant. Let $n \geq k $ be some natural number. Consider the following exponential Diophantine relation:
$$E_{Code}(x, A(n, y_0, \dots, y_{k-1}), n, \lambda_0) 
+ E_\pi(x, A(n, y_0, \dots, y_{k-1}), 0, n, y_0, \vec \lambda_1) + $$
$$ + E_\pi(x, A(n, y_0, \dots, y_{k-1}), 1, n, y_1, \vec \lambda_2) + \dots + E_\pi(x, A(n, y_0, \dots, y_{k-1}), k-1, n, y_{k-1}, \vec \lambda_k) +$$ 
$$ + E_\pi(x, A(n, y_0, \dots, y_{k-1}), j, n, z_0, \vec \lambda_{k+1}) + \dots + E_\pi(x, A(n, y_0, \dots, y_{k-1}), j + k-1, n, z_{k-1}, \vec \lambda_{2k}) +$$
$$ + E_\pi(x, A(n, y_0, \dots, y_{k-1}), j+k, n, z_k, \vec \lambda_{2k+1}) + E_F(j+k, z_0, \dots, z_{k-1}, z_k, \vec \lambda_{2k+2}) =0.$$
This relation says that:
\begin{enumerate}
    \item The number $x$ is the positional code of a finite sequence $x_0, \dots, x_{n}$ in the base $A(n, y_0, \dots, y_{k-1})$.
    \item The first $k$ terms are identic with $y_0, \dots, y_{k-1}$. 
    \item For some place $k \leq j+k \leq n$, we denote the elements 
    $x_j, x_{j+1}, \dots, x_{j+k}$ with $z_0, \dots, z_k$.
    \item The recurrence rule:
    $$z_{j+k} = F(j+k, z_0, \dots, z_{j+k-1})$$
    is fulfilled. 
\end{enumerate}
We denote this equation as:
$$E(n, x, \vec y)(j, \vec z, \vec \lambda) = 0. $$
In this expression, we consider the letters $(n, x, \vec y )$
parameters, where $\vec y = (y_0, \dots, y_{k-1})$. The letters $(j, \vec z, \vec \lambda)$ are considered unknowns. Here $\vec z = (z_0, \dots, z_{k})$, while the tuple $\vec \lambda$ is the concatenation of the tuples $\lambda_0$, $\vec \lambda_1$, $\dots$, $\vec \lambda_{2k+2}$.

We observe the following property of this exponential Diophantine equation: for given $(n,x,\vec y)$, and for every $0 \leq j \leq n-k$, there is at most one tuple $(\vec z, \vec \lambda)$ such that $E(n, x, \vec y)(j, \vec z, \vec \lambda) = 0 $. So, as $0 \leq j \leq n-k$, this equation has for given values of the parameters $(n,x,\vec y)$ at most $n-k + 1$ solutions.

We formulate and prove two claims that help us to apply here the hypercube method shown in \cref{Sect.Hypercube}. 

{\it Claim 1:} {\it There is an arithmetic term $t(n,x, \vec y)$ such that}
$$\forall \,(n,x, \vec y) \in \mathbb N^{k+2}\,\,\forall \, (j, \vec z, \vec \lambda) \in \mathbb N^m $$ $$ E(n, x, \vec y)(j, \vec z, \vec \lambda) = 0 \rightarrow (j, \vec z, \vec \lambda) \in [0, t(n, x, \vec y)]^m $$

{\it Proof of Claim 1:} We have seen that $j \leq n-k$. Also, every element of the tuple $\vec z$ is $\leq x$ which is itself $< A(n, \vec y)$. For the components $\lambda_i$ of the tuple $\vec \lambda$, we observe that there are arithmetic terms $L_i(n, x, \vec y)$ such that $\lambda_i < L_i(n,x,\vec y)$. Finally, we recall that the functions:
$$|a-b| := (a \dot{-} b) + (b \dot{-} a),$$
$$\max(a,b) := \left \lfloor \frac{(a+b) \dot{-} |a-b| }{2}  \right \rfloor,$$
are arithmetic terms, and also the functions:
$$\max(a, b, c) := \max(\max(a,b),c),$$
$$\max(a, b, c, d) := \max(\max(a,b,c), d),$$
and so on, are all arithmetic terms. \qed 

{\it Claim 2:} {\it There is an arithmetic term $w(n,x,\vec y)$ such that}
$$\forall \,(n, x, \vec y) \in \mathbb N^{k+2} \,\, \forall \,(j, \vec z, \vec \lambda)  \in [0, t(n, x, \vec y)]^m   $$
$$0 \leq E(n, x, \vec y)(j, \vec z, \vec \lambda) < 2^{w(n,x,\vec y)}.$$

{\it Proof of Claim 2:} The standard argument for the existence of such an arithmetic term, is the following. One replaces in the exponential expression $E(n, x, \vec y)(j, \vec z, \vec \lambda)$ every minus sign with a plus sign, and one replaces every of the $m$ unknowns $(j, \vec z, \vec \lambda)$ with $t(n, x, \vec y)$. One can define $w(n,x, \vec y)$ to be the resulting arithmetic term. \qed 

{\it Claim 3:} {\it There is an arithmetic term $G(n,x,\vec y)$ such that} 
$$\forall \,(n, x, \vec y) \in \mathbb N^{k+2} \,\,\,\, G(n, x, \vec y ) = | \{(j, \vec z, \vec \lambda)\,:\, E(n, x, \vec y)(j, \vec z, \vec \lambda) =0 \}|. $$

{\it Proof of Claim 3:} This follows from {\it Claim 1}, {\it Claim 2} and \cref{Sect.Hypercube}. \qed 

We have already seen that for all $n \geq k$ and all $(x, \vec y) \in \mathbb N^{k+1}$,
$$G(n, x, \vec y) \leq n-k+1.$$
Now we rewrite the term $G(n,x, \vec y)$ as $G(n, \vec y)(x)$. We consider $n \geq k$ and $\vec y$ to be parameters, while $x$ is the only one unknown. 

{\it Claim 4:} {\it For given values $n \geq k$ and $\vec y \in \mathbb N^k$, the equation
$$G(n, \vec y)(x) = n-k+1$$
has a unique solution $x = C \in \mathbb N$. This solution is the positional code of the finite sequence $a(0), a(1),$ $ \dots,$ $ a(n)$ that satisfies the recurrence rule:
$$a(j+k) = F(j, a(j), \dots, a(j+k-1)),$$
with initial values $a(0) = y_0$, $\dots$, $a(k-1)=y_{k-1}$, encoded with coding base $A(n, y_0, \dots, y_{k-1})$.
} 

{\it Proof of Claim 4:} The value of $G(n, \vec y)(x)$ is the number of $j$ with $k \leq j + k\leq n$ for which $ a(j+k) := \pi(x, A, j+k, n) $ satisfies the recurrence rule $a(j+k) = F(j, a(j), \dots, a(j+k-1))$, where all $a(l) = \pi(x, A, l, n)$ for $j \leq l \leq j+k-1$ and $A = A(n, \vec y)$. If this number equals its maximum $n-k+1$, all elements $\pi(x, A, j+k, n)$ satisfy the recurrence rule. But the first elements of the sequence respect the initial conditions. \qed 

Now let $E_1(n, \vec y, x, M ,\vec \mu) = 0$ be an exponential Diophantine equation which, for every $(n, \vec y, M)$ satisfies:
$$\exists \, \vec \mu \,\,\,\, E_1(n, \vec y, x, M, \vec \mu) = 0 \longleftrightarrow G(n, \vec y)(x) = M.$$
This equation exists according to \cref{lemma:fromtermtoeq}. We write this equation in the form:
$$E_1(n, \vec y, M)( x,  \vec \mu) = 0.$$
For the value $M = n-k+1$, we already know that the equation:
$$E_1(n, \vec y, n-k+1)( x,  \vec \mu) = 0$$
has exactly one solution $(x, \vec \mu)$ with $x = C$. 
\end{proof} 
We introduce two new unknowns $\omega_1$ and $\omega_2$ and we consider the equation:
$$E_1(n, \vec y, n-k+1)( \omega_1 + \omega_2 + 1,  \vec \mu) = 0,$$
that we write as:
$$E_2(n, \vec y)(\omega_1, \omega_2, \vec \mu) = 0,$$
where we recall that $k$ is a constant. 

{\it Claim 5:} {\it The number of solutions $(\omega_1, \omega_2, \vec \mu)$ is equal with the unique code-number $x = C$. }

{\it Proof of Claim 5:} As the unique solution of the equation $E_1(n, \vec y, n-k+1)( x,  \vec \mu) = 0$ is a tuple $(x, \vec \mu) = (C, \vec \mu)$, the number of solutions $(\omega_1, \omega_2, \vec \mu)$ of $E_2$ equals the number of ways in which one can write $C$ in the form $\omega_1 + \omega_2 + 1$. But this number is equal $C$. \qed

Now, by applying {\it Claim 1} and {\it Claim 2} for the exponential Diophantine equation $E_2$ we find corresponding terms $t_2(n, \vec y)$ and $w_2(n, \vec y)$. We apply again the construction from \cref{Sect.Hypercube}, and we find an arithmetic term $H(n, \vec y)$ such that:
$$H(n, \vec y) = C.$$
The sequence-term $a(n)$ is the projection:
$$a(n) = \pi(C, A, n, n) = \pi(H(n, \vec y), A(n, \vec y), n, n),$$
where in general
$$\pi(C, A, n, n) = \left \lfloor \frac{C}{A^{n}} \right \rfloor.$$
Altogether, 
$$a(n) = \left \lfloor \frac{H(n, \vec y)}{A(n, \vec y)^{n}} \right \rfloor $$
and this is the arithmetic term representation we were looking for. 
\end{proof}

\section{Some examples from Physics}\label{Sect.Physics}

The method presented in \cref{Sect.General} will be applied in the next section to construct a Turing complete arithmetic term. In this section we present some easier examples, based on recurrent sequences relevant in Physics. Moreover, we show here how the method can be used to find closed forms for sequences whose elements are not necessarily natural numbers, but can be general integers, rational numbers, elements of $\mathbb Q[i]$ or elements of even more complicated computable rings.

\subsection{The Tent Map}\label{Subsect.TentMap} 

The Tent Map, see for example \cite{Bruin}, is defined by the recurrence $x(n+1) = f_\mu(x(n))$ where

$$f_\mu(x) = \mu \min( x, 1-x), $$

with given $x(0) \in [0,1]$ and $\mu \in [0, 2]$. We see that all elements of the sequence belong to $[0,1]$. Indeed,  if $x \in [0,1]$ then 
$$f_\mu(x) = \mu \min( x, 1-x)\leq 2 \left(\frac{1}{2}\right)= 1.$$

In order to represent the Tent Map by arithmetic terms, we consider the case with $x(0), r \in \mathbb Q$. 

Let $x(0) = a/b$ and $r = p/q $.

If we write $x(n) = a(n) / b(n)$ in natural numbers $a(n)$ and $b(n)$.

The recurrence rule becomes: 

$$\frac{a(n+1)}{b(n+1)}    = \frac{p}{q} \cdot \min \left (\frac{a(n)}{b(n)}, \frac{b(n) - a(n)}{b(n)} \right ).$$

We identify numerators and denominators and we obtain the 
crossed recurrence rules:
$$a(n+1) = p \min( a(n) ,  b(n) - a(n) ),$$
$$b(n+1) = q b(n).$$
for two sequences of natural numbers, with initial conditions $a(0) = a$ and $b(0) = b$. 

For the sequence $b(n)$ we get the following arithmetic term:
$$b(n) = b q^n,$$
where $q$ and $b$ are constant natural numbers.

We have to find a closed term representing the sequence defined by the recurrence:
$$a(n+1) = p \min( a(n) ,  bq^n - a(n) ),$$

with $a(0) = a \in \mathbb N$ and 
$ p, q, b \in \mathbb N$ are further constant natural numbers with $a \leq b$ and $p \leq 2q$. 

We observe that $b(n)$ is strictly increasing, and that in general $a(n) \leq b(n)$. This means that for all $k = 0, 1, \dots n$, $a(k) \leq b(n) < b(n) + 1$. We consider the following relations:
\begin{equation}\label{eqbasistent}
    B = b q^n + 1,
\end{equation}
\begin{equation}\label{eqrecurrencetenttent}
    x = x_1 + \alpha B^k + \beta B^{k+1} + x_2 B^{k+2},
\end{equation} 
\begin{equation}\label{eqchoicetent}
    [(2\alpha - bq^k - \gamma)^2 + (\beta - pbq^k + p\alpha)^2]
    [(bq^k - 2\alpha - \gamma)^2 + (\beta - p\alpha)^2] = 0,
\end{equation}
\begin{equation}\label{conditionatent}
    \alpha \leq bq^k,
\end{equation}
\begin{equation}\label{eqremainderconditiontent}
    x_1 < B^k.
\end{equation} 

We observe that if $\alpha \leq pq^k < B$ then $\beta$ will be the numerator of the next sequence term, so it will fulfill the inequality $\beta \leq bq^{k+1} < B$. This inequality has not to be written down explicitly. 

This system can be transformed in an exponential Diophantine equation with parameters $b, p, q, x, n$ and all other variables, including $k$, as unknowns. It is important to understand that $q$, $p$, $b$, $n$, $x$ and, consequently, $B$, are constant. The exponential polynomial has not to be simple in these variables. 

We transform the system in a simple exponential polynomial as follows: 
\begin{equation}\label{eq1tentmap1}
    \left(\alpha + x_3 - qb^k\right)^2 + \left(x_1 + x_4 + 1 - B^k\right)^2 + (x_1 + \alpha B^k + \beta B^{k+1} + x_2 B^{k+2} - x)^2 +
\end{equation}
\begin{equation}\label{eq1tentmap2}
+ [(2\alpha - bq^k - \gamma)^2 + (\beta - pbq^k + p\alpha)^2]
    [(bq^k - 2\alpha - \gamma)^2 + (\beta - p\alpha)^2] = 0
\end{equation}
Call this equation:
$$E(b,p,q,x,n)(k,\alpha, \beta, \gamma, x_1, x_2, x_3) = 0,$$
where $B$ stays for $bq^{n} + 1$. By the hypercube method, there is an arithmetic term 
$$N(b,p,q,x,n)$$
counting the number of solutions $(k, \alpha, \beta, \gamma, x_1, x_2, x_3)$ of this equation in $\mathbb N^7$.

\begin{lemma}\label{lemma:maintent} If $p \leq 2q$ and $a \leq b$ then the system of conditions
$$ y < B^{n+1}\,\, \wedge \,\,y = a + B x_1\,\, \wedge \,\, N(b,p,q,y,n) = n $$
has a unique solution $y$. Moreover, this solution encodes the sequence $a(0),a(1),\dots,a(n)$ as positional code with base $B$. 
\end{lemma}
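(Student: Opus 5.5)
The plan is to read the counting term $N(b,p,q,y,n)$ as the number of positions $k$ at which the base-$B$ digits of $y$ obey the Tent recurrence. Then $N=n$ forces all $n$ transitions $a(k)\to a(k+1)$, $0\le k\le n-1$, to hold. Together with the first digit being $a$, this pins $y$ down by induction.

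\textbf{Step 1: one solution per valid position.} Fix $y<B^{n+1}$ with base-$B$ digits $y_0,\dots,y_n$, all $<B$. For each $k$, equation (\ref{eqrecurrencetenttent}) with $x_1<B^k$ and $\alpha,\beta<B$ forces $x_1=\sum_{i<k}y_iB^i$, $\alpha=y_k$, $\beta=y_{k+1}$ and $x_2=\lfloor y/B^{k+2}\rfloor$. The auxiliary slacks $x_3,x_4$ are determined as well. Next, the product (\ref{eqchoicetent}) with $\gamma\ge 0$ selects the branch. If $2\alpha\ge bq^k$, then $\beta=p(bq^k-\alpha)$; otherwise $\beta=p\alpha$. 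In both cases $\beta=p\min(\alpha,bq^k-\alpha)$, and $\gamma=|2\alpha-bq^k|$ is unique. On the tie $2\alpha=bq^k$ both factors vanish with the same $\gamma=0$ and the same $\beta$, so it is still one tuple. Hence each $k$ contributes at most one solution, and exactly one precisely when $y_k\le bq^k$ and $y_{k+1}=p\min(y_k,bq^k-y_k)$.

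Some care is needed here with the implicit bound $\beta<B$ and with $k$ ranging up to $n$. For $k\ge n-1$ we have $B^{k+1}$ or $B^{k+2}$ exceeding $y$, which forces $\beta=0$, or $\alpha=0$ and $\beta=0$. I would add (or read into the intended equation) the constraint $k\le n-1$, for instance via a slack $(k+x_5+1-n)^2$. This makes the count exactly the number of valid transitions $k\in\{0,\dots,n-1\}$. I would also note that the typo $qb^k$ in (\ref{eq1tentmap1}) should read $bq^k$.

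\textbf{Step 2: existence.} Let $y=\sum_{i=0}^n a(i)B^i$. Induction using $p\le 2q$ gives $0\le a(i)\le bq^i\le bq^n<B$. Indeed, if $a(i)\le bq^i$ then $a(i+1)=p\min(a(i),bq^i-a(i))\le p\,bq^i/2\le bq^{i+1}$, and the base case is $a\le b$. So $y<B^{n+1}$ and $y\equiv a\pmod B$, i.e.\ $y=a+Bx_1$. Every transition is valid, so $N=n$.

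\textbf{Step 3: uniqueness.} If $y$ satisfies all three conditions, its digits satisfy $y_0=a$, and $N=n$ means all $n$ transitions $k=0,\dots,n-1$ hold. Induction on $k$ then gives $y_k=a(k)$ for all $k\le n$, so $y$ is the code from Step 2.

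The main obstacle I expect is Step 1. It must be checked that the digit extraction is forced even though $\beta<B$ is not written explicitly. Here $\beta$ is determined by $\alpha$ and is $\le bq^{k+1}\le B-1$ for $k<n$, so the representation is the genuine base-$B$ one. One must also settle the boundary indices $k\ge n$ so that the count is exactly $n$ and not off by one; that is where I would adjust the range of $k$ if necessary.
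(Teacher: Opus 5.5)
Your proposal is correct and follows the paper's own route: each $k$ gives at most one solution, $N=n$ then forces a valid transition at every $k\in\{0,\dots,n-1\}$, and induction from $y_0=a$ identifies the digits with $a(0),\dots,a(n)$; your existence step and your derivation of $\beta\le bq^{k+1}<B$ from $p\le 2q$ fill in what the paper leaves implicit. The bound $k\le n-1$ is genuinely needed and not just a precaution: for every $k>n$ the tuple $\alpha=\beta=0$, $\gamma=bq^k$, $x_1=y$, $x_2=0$ (with the forced slacks) solves the displayed equation, because $0\mapsto 0$ is a valid tent step, so the paper's proof tacitly assumes this bound --- note, though, that only $k\ge n$ forces $\beta=0$, not $k=n-1$, which is the last genuine transition.
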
 

{\bf Proof}: By the first condition, $y$ is a code of a sequence of length $n+1$. By the second condition, the first encoded number is $a(0) = a$. Given $y$, in any solution $(k, \alpha, \beta, \gamma, x_1, x_2, x_3)$, $k$ uniquely determines $\alpha$ as the projection of $y$ on place $k$, and uniquely determines the values of the unknowns $x_1, x_2, x_3$. As $k = 0, 1, \dots, n-1$ can take only $n$ values, and there are exactly $n$ solutions, for every value of $k$ there is a solution. But this forces that for $k = 1$ the corresponding $\alpha = a(1)$, for $k = 2$, the corresponding $\alpha = a(2)$, and so on. \qed 

The condition $y < B^{n+1}$ can be written as:
$$\left( 2^{y + x_4 + 1} - 2^{B^{n+1}} \right)^2 = 0,$$
because $B^{n+1}$ consists only in parameters, and purely exponential equations lead to $G_0$-terms, which are much easier than the $G_1$ terms. 
We observe also that $y = a + Bx_1$ is best written as:
$$\left( 2^y - 2^a \cdot {(2^B)}^{x_1} \right)^2 = 0.$$

One transforms the conditions of \cref{lemma:maintent} in an exponential Diophantine equation 
$$F(a, b, p, q, n)(y, \vec x) = 0$$
which has a unique solution $(y, \vec x)$. By replacing $y$ with $\beta + \gamma + 1$, we obtain an exponential Diophantine equation:
$$F(a, b, p, q, n)(\alpha + \beta + 1, \vec x) = 0.$$ 
By the hypercube method, one constructs the corresponding counting term $M(a, b, p, q, n)$ with the property that:
$$M(a, b, p, q, n) = y$$
defined above. It follows that for all $n \in \mathbb N$,
$$a(n) = \left \lfloor \frac{M(a, b, p, q, n)}{\left ( bq^n + 1 \right )^n} \right \rfloor$$ 
and the closed form for the Logistic Map is
$$x(n) = \frac{a(n)}{b(n)},$$
where this time the division is to be understood as rational fraction.  Sometimes, this fraction could be possibly simplified, and the reduced form can be as well expressed by arithmetic terms, because one has a term computing $\gcd(a,b)$. 

\subsection{The Logistic Map}\label{Subsect.Logistic} 

The Logistic Map is defined by the recurrence $x(n+1) = f_r(x(n))$ where

$$f_r(x) = r x(1-x), $$

with given $x(0) \in [0,1]$ and $r \in [0, 4]$, see \cite{May}. We see that all elements of the sequence belong to $[0,1]$. Indeed, by the means' inequality, if $x \in [0,1]$ then 
$$f_r(x) = rx(1-x)\leq 4 \left(\frac{x + (1-x)}{2}\right)^2 = 1.$$

In order to represent the Logistic Map by arithmetic terms, we consider the case with $x(0), r \in \mathbb Q$. 

Let $x(0) = a/b$ and $r = p/q $.

If we write $x(n) = a(n) / b(n)$ in natural numbers $a(n)$ and $b(n)$.

The recurrence rule becomes: 

$$\frac{a(n+1)}{b(n+1)}    = \frac{p}{q} \cdot \frac{a(n)}{b(n)} \cdot \left ( 1 - \frac{a(n)}{b(n)} \right ).$$

We identify numerators and denominators and we obtain the 
crossed recurrence rules:
$$a(n+1) = p a(n) ( b(n) - a(n) ),$$
$$b(n+1) = q b(n)^2.$$
for two sequences of natural numbers, with initial conditions $a(0) = a$ and $b(0) = b$. 

For the sequence $b(n)$ we get the following arithmetic term:
$$b(n) = q^{2^n - 1 } b^{2^n},$$
where $q$ and $b$ are constant natural numbers. Indeed, $b(0) = q^{2^0 - 1 } b^{2^0} = b$ and $b(n+1) = q q^{2^{n+1} - 2 } b^{2^{n+1}}$.

We have to find a closed term representing the sequence defined by the recurrence:
$$a(n+1) = p a(n) (  q^{ 2^n - 1 } b^{2^n} - a(n) )$$

with $a(0) = a \in \mathbb N$ and 
$ p, q, b \in \mathbb N$ are further constant natural numbers with $a \leq b$ and $p \leq 4q$. 

We observe that $b(n)$ is strictly increasing, and that in general $a(n) \leq b(n)$. This means that for all $k = 0, 1, \dots n$, $a(k) \leq b(n) < b(n) + 1$. We consider the following relations:
\begin{equation}\label{eqbasis}
    B = q^{2^n - 1} b^{2^n} + 1,
\end{equation}
\begin{equation}\label{eqrecurrence}
    x = x_1 + \alpha B^k + p \alpha ( q^{2^k - 1} b^{2^k} - \alpha ) B^{k+1} + x_2 B^{k+2},
\end{equation} 
\begin{equation}\label{conditiona}
    \alpha \leq q^{2^k - 1} b^{2^k},
\end{equation}
\begin{equation}\label{eqremaindercondition}
    x_1 < B^k.
\end{equation} 

We observe that if $\alpha \leq q^{2^k - 1} b^{2^k} < B$ then $\beta$ will be the numerator of the next sequence term, so it will fulfill the inequality $ p \alpha ( q^{2^k - 1} b^{2^k} - \alpha ) \leq q^{2^{k+1} - 1} b^{2^{k+1}} < B$. This inequality has not to be written down explicitly. 

This system can be transformed in an exponential Diophantine equation with parameters $b, p, q, x, n$ and all other variables, including $k$, as unknowns. It is important to understand that $q$, $p$, $b$, $n$, $x$ and, consequently, $B$, are constant. The exponential polynomial has not to be simple in these variables. 

We transform the system in a simple exponential polynomial as follows: 
\begin{equation}\label{eq1logisticmap1}
    \left(x_3 + 1 -2^k\right)^2 + \left(\alpha + x_4 - q^{x_3}b^{x_3+1}\right)^2 + \left(x_1 + x_5 + 1 - B^k\right)^2 +
\end{equation}
\begin{equation}\label{eq1logisticmap2}
+ \left(x_1 + \alpha B^k + p \alpha ( q^{x_3} b^{x_3 + 1} - \alpha ) B^{k+1} + x_2 B^{k+2} - x\right)^2 = 0
\end{equation}
Call this equation:
$$E(b,p,q,x,n)(k,\alpha,x_1, x_2, x_3,x_4,x_5) = 0,$$
where $B$ stays for $q^{2^n - 1} b^{2^n} + 1$. By the hypercube method, there is an arithmetic term 
$$N(b,p,q,x,n)$$
counting the number of solutions $(k, \alpha, x_1, \dots, x_5)$ of this equation in $\mathbb N^7$. 

\begin{lemma}\label{lemma:mainlogistic} If $p \leq 4q$ and $a \leq b$ then the system of conditions
$$y < B^{n+1} \,\, \wedge \,\,y = a + B x_1\,\, \wedge \,\, N(b,p,q,y,n) = n $$
has a unique solution $y$. Moreover, this solution encodes the sequence $a(0),a(1),\dots,a(n)$ as positional code with base $B$. 
\end{lemma}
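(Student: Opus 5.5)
The plan is to follow the proof of \cref{lemma:maintent} for the Tent Map, with the Logistic recurrence in place of the min-rule. First I will check that the base $B = q^{2^n-1}b^{2^n}+1$ is large enough to make positional coding unambiguous. An induction on $j$ shows $a(j) \leq b(j) = q^{2^j-1}b^{2^j}$. The step is
$$a(j+1) = p\,a(j)(b(j)-a(j)) \leq 4q \cdot \tfrac{b(j)^2}{4} = q\,b(j)^2 = b(j+1),$$
by the arithmetic–geometric mean inequality together with $p \leq 4q$; the base case is $a(0)=a\leq b$. Since $b(j)$ is nondecreasing in $j$, every $a(j)$ with $j\leq n$ satisfies $a(j) < B$. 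Consequently the number
$$y_0 = \sum_{j=0}^n a(j)B^j$$
is a legitimate code: it satisfies $y_0 < B^{n+1}$ and $y_0 \equiv a \pmod B$.

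Next I will prove existence, i.e.\ that $N(b,p,q,y_0,n)=n$. Fix $k\in\{0,\dots,n-1\}$. The unknowns are forced as follows:
\begin{itemize}
\item $\alpha=a(k)$;
\item $x_1=\sum_{j<k}a(j)B^j$, which is $<B^k$, so $x_5$ is determined;
\item $x_2$ is the tail above position $k+2$;
\item $x_3=2^k-1$;
\item $x_4=b(k)-a(k)\geq 0$.
\end{itemize}
With these values equation (\ref{eq1logisticmap1})–(\ref{eq1logisticmap2}) holds, because the digit at position $k+1$ is exactly $p\,a(k)(b(k)-a(k))$. So each such $k$ contributes a solution.

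Conversely, I will show that every solution of $E(b,p,q,y,n)=0$ is of this form: $k$ determines $x_3$, $\alpha$, $x_1$, $x_2$ uniquely, as long as the digits $\alpha$ and $\beta=p\alpha(b(k)-\alpha)$ are $<B$. The requirement $\alpha\le b(k)$ is enforced by $x_4$. The bound $\beta<B$ follows from the same AM–GM estimate, $\beta\le q\,b(k)^2=b(k+1)\le b(n)<B$ when $k+1\le n$. When $k\ge n$, we have $B^{k+1}>y$, so $\beta$ would have to be $0$; to exclude these spurious solutions I expect to use the code bound $y<B^{n+1}$ together with $x_1<B^k$. If that is not enough, I will add the explicit constraint $k+x_6+1=n$, which makes the count exactly $n$.

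Uniqueness is the step I expect to be delicate. Suppose $y$ satisfies the three conditions. Each solution is indexed by a distinct $k\in\{0,\dots,n-1\}$, and within a fixed $k$ the remaining unknowns are unique by uniqueness of base-$B$ digits. Having exactly $n$ solutions therefore forces a solution for every $k$. Read inductively from $k=0$, this says:
\begin{itemize}
\item digit $0$ of $y$ is $a$;
\item digit $k$ is at most $b(k)$ and digit $k+1$ equals $p\cdot\mathrm{digit}_k\cdot(b(k)-\mathrm{digit}_k)$.
\end{itemize}
Hence by induction digit $k$ equals $a(k)$ for every $k\le n$, and since $y<B^{n+1}$, we get $y=y_0$.

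The main obstacle is making sure that the count is exactly $n$ and not larger. The point to verify is that for a fixed $k$, no second tuple arises, for instance via a carry in which $\beta\ge B$ is absorbed into $x_2$. This is excluded because $\beta<B$ is automatic from $\alpha\le b(k)$ and $p\le 4q$; the only remaining risk is $k\ge n$, which is handled as above.
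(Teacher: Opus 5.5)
Your argument is the paper's argument, with details filled in. For fixed $k$ all other unknowns are forced; exactly $n$ solutions then forces a solution for each $k\in\{0,\dots,n-1\}$; induction on the base-$B$ digits recovers $a(0),\dots,a(n)$. Your AM--GM estimate $p\alpha(b(k)-\alpha)\le q\,b(k)^2=b(k+1)<B$ and the explicit existence check supply details the paper leaves implicit.

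The one step you leave open must be settled, and only your fallback works. The conditions $y<B^{n+1}$ and $x_1<B^k$ do not rule out $k\ge n$; the bound $x_1<B^k$ is exactly what makes spurious solutions possible. For every $k\ge n+1$ and every $y<B^{n+1}$, the following tuple solves $E(b,p,q,y,n)(k,\alpha,x_1,\dots,x_5)=0$:
$$\alpha=x_2=0,\quad x_1=y,\quad x_3=2^k-1,\quad x_4=q^{2^k-1}b^{2^k},\quad x_5=B^k-1-y.$$
It works because $y\le B^k-1$ lets all of $y$ sit in $x_1$. In addition, $k=n$ contributes a solution whenever the top digit of $y$ is $0$ or $b(n)$, or $p=0$.

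Read literally, the count in $\mathbb N^7$ is therefore infinite for every admissible $y$, and the system in the statement has no solution. Adding the summand $(k+x_6+1-n)^2$ is thus necessary, not optional. With it, every unknown is bounded by an arithmetic term in the parameters, so the hypercube method applies, and the rest of your proof goes through verbatim.

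The paper's proof has the same lacuna: it asserts that $k$ takes only the values $0,\dots,n-1$, but nothing in the equation enforces this. Your repaired version is a strictly more careful form of the same proof.
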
 

\begin{proof} By the first condition, $y$ is a code of a sequence of length $n+1$. By the second condition, the first encoded number is $a(0) = a$. Given $y$, in any solution $(k, \alpha, x_1, \dots, x_5)$, $k$ uniquely determines $\alpha$ as the projection of $y$ on place $k$, and uniquely determines the values of the unknowns $x_1, \dots, x_5$. As $k = 0, 1, \dots, n-1$ can take only $n$ values, and there are exactly $n$ solutions, for every value of $k$ there is a solution. But this forces that for $k = 1$ the corresponding $\alpha = a(1)$, for $k = 2$, the corresponding $\alpha = a(2)$, and so on. 
\end{proof}

Again, we better work with: 
$$\left( 2^{y + x_4 + 1} - 2^{B^{n+1}} \right)^2 = 0,$$
$$\left( 2^y - 2^a \cdot {(2^B)}^{x_1} \right)^2 = 0.$$
for the first two relations. 

One transforms the conditions of \cref{lemma:mainlogistic} in an exponential Diophantine equation 
$$F(a, b, p, q, n)(y, \vec x) = 0$$
which has a unique solution $(y, \vec x)$. By replacing $y$ with $\beta + \gamma + 1$, we obtain an exponential Diophantine equation:
$$F(a, b, p, q, n)(\alpha + \beta + 1, \vec x) = 0.$$ 
By the hypercube method, one constructs the corresponding counting term $M(a, b, p, q, n)$ with the property that:
$$M(a, b, p, q, n) = y$$
defined above. It follows that for all $n \in \mathbb N$,
$$a(n) = \left \lfloor \frac{M(a, b, p, q, n)}{\left ( q^{2^n - 1} b^{2^n} + 1 \right )^n} \right \rfloor$$ 
and the closed form for the Logistic Map is
$$x(n) = \frac{a(n)}{b(n)},$$
where this time the division is to be understood as rational fraction.  Sometimes, this fraction could be possibly simplified, and the reduced form can be as well expressed by arithmetic terms, because one has a term computing $\gcd(a,b)$. 

\subsection{Collatz' Sequence}\label{Subsect:Collatz}

Instead of the original Collatz' Sequence, see \cite{Collatz}, given by the recurrence:
$$ a(n+1) = \begin{cases} 3a(n) + 1, & a(n) \textrm{ odd,}\\
a(n)/2, & a(n) \textrm{ even,}\end{cases}$$
it is easier to construct an arithmetic term for its subsequence of odd terms. Its recurrence reads:
$$a(n+1) = \frac{3a(n) + 1}{2^{\nu_2(3a(n)+1)}}.$$ 
We put the condition that $a(0)$ is always odd. We observe that Collatz' Conjecture about the existence of an $n$ such that $a(n) = 1$ for the original sequence is equivalent with the same statement for the modified sequence. We intend to construct an arithmetic term $CS(a,n)$ which values $a(n)$, the $n$-th term of the sequence with $a(0) = a$. 

\begin{lemma}
    For the odd Collatz' Sequence, one has for all $n \in \mathbb N$ that $a(n+1) \leq 2a(n)$. 
\end{lemma}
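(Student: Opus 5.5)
We prove the lemma directly from the recurrence $a(n+1) = (3a(n)+1)/2^{\nu_2(3a(n)+1)}$, using only that every $a(n)$ is odd. This is where the standing assumption that $a(0)$ is odd is needed. By induction every term is odd: $a(n+1)$ is $3a(n)+1$ divided by its full power of two, so it is odd and positive. In particular $a(n) \geq 1$ for all $n$.

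Next we bound the $2$-adic valuation from below. Since $a(n)$ is odd, $3a(n)$ is odd and $3a(n)+1$ is even, so $\nu_2(3a(n)+1) \geq 1$. Therefore
$$a(n+1) = \frac{3a(n)+1}{2^{\nu_2(3a(n)+1)}} \leq \frac{3a(n)+1}{2}.$$

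It remains to compare $(3a(n)+1)/2$ with $2a(n)$. The inequality $(3a(n)+1)/2 \leq 2a(n)$ is equivalent to $3a(n)+1 \leq 4a(n)$, that is, $a(n) \geq 1$, which holds by the first step. So $a(n+1) \leq 2a(n)$. Equality occurs exactly when $a(n) = 1$ and $\nu_2(4) = 2$; in that case $a(n+1) = 1 \leq 2$, so the bound is not tight but is still valid.

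There is no real obstacle. The only care needed is to state the oddness invariant explicitly, because the bound $\nu_2 \geq 1$ fails for even terms. For the application, iterating the lemma gives $a(j) \leq 2^j a(0) \leq 2^n a$ for all $j \leq n$. So $A(n,a) = 2^n a + 1$ is an arithmetic-term bound as required in \cref{metatheorem}, and it can serve as the coding base $B$ in the same way as in \cref{Subsect.TentMap} and \cref{Subsect.Logistic}.
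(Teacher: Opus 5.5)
Your proof is correct and is the paper's argument: oddness gives $a(n)\ge 1$ and $\nu_2(3a(n)+1)\ge 1$, hence $a(n+1)\le (3a(n)+1)/2\le 2a(n)$. Your aside about equality is muddled, since $a(n+1)$ is odd and $2a(n)$ is even, so $a(n+1)=2a(n)$ never occurs; this does not affect the argument.
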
 

\begin{proof}
    As $a(n)$ is odd, one also has $a(n) \geq 1$. Then:
    $$a(n+1) \leq \frac{3a(n) + 1}{2} \leq \frac{4a(n)}{2} = 2a(n).$$
\end{proof}

If we want to encode the finite sequence $a(0), a(1), \dots , a(n)$ positionally, the base $B = 2^{n+1}a(0) = 2^{n+1}a$ is sufficiently large for this encoding. We consider the following equations:

\begin{equation}\label{eqbasiscollatz}
    B = 2^{n+1}a,
\end{equation}
\begin{equation}\label{eqrecurrencecolatz}
    x = x_1 + \alpha B^k +  \beta B^{k+1} + x_2 B^{k+2},
\end{equation} 
\begin{equation}\label{eqrulecollatz1}
    3\alpha + 1 = 2^\gamma \beta,
\end{equation}
\begin{equation}\label{eqrulecollatz2}
    \beta = 2\delta + 1
\end{equation}
\begin{equation}\label{conditioncbase}
    \alpha < B,
\end{equation}
\begin{equation}\label{eqremainderconditioncollatz}
    x_1 < B^k.
\end{equation} 

This system can be transformed in an exponential Diophantine equation with parameters $a, x, n$. All other variables, including $k$, are unknowns. It is important to understand that $a$, $x$, $n$ and $B$, are constant. The exponential polynomial has not to be simple in these variables. 

$$( x -  x_1 - \alpha B^k -  \beta B^{k+1} - x_2 B^{k+2}  )^2 + (3\alpha + 1 - 2^\gamma \beta )^2 + (2^\beta - 2^{2\delta + 1} )^2 + (2^{\alpha + \varepsilon + 1}  - 2^B )^2 + (2^{x_1 + x_3 + 1} - 2^{B^k})^2 = 0. $$ 

Call this equation:
$$E(a, x, n)(\alpha. \beta, \gamma, \delta, \varepsilon, x_1, x_2, x_3) = 0 $$
where $B$ stays for $2^{n+1} a$. By the hypercube method, there is an arithmetic term
$$P(a, x, n)$$
counting the number of solutions of this equation in $\mathbb N^8$.

\begin{lemma}\label{lemma:maincollatz} The system of conditions
$$y < B^{n+1} \,\, \wedge \,\,y = a + B x_1\,\, \wedge \,\, P(a, x, n) = n $$
has a unique solution $y$. Moreover, this solution encodes the sequence $a(0),a(1),\dots,a(n)$ as positional code with base $B$. 
\end{lemma}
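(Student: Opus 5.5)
I will follow the same pattern as the proofs of \cref{lemma:maintent} and \cref{lemma:mainlogistic}, adapted to the odd Collatz recurrence. First I fix the base and check it is large enough. By the preceding lemma, $a(j+1) \leq 2a(j)$, so by induction $a(j) \leq 2^j a \leq 2^n a < 2^{n+1}a = B$ for all $0 \leq j \leq n$. Hence the sequence $a(0),\dots,a(n)$ has a well-defined positional code $C = \sum_{j=0}^n a(j) B^j$ with all digits less than $B$. This gives existence: $C < B^{n+1}$ and $C \equiv a \pmod B$.

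Next I will show that $P(a,C,n) = n$. The key point is that for fixed $x$, a solution of $E(a,x,n) = 0$ is determined by the index $k$. Indeed, the conditions $x = x_1 + \alpha B^k + \beta B^{k+1} + x_2B^{k+2}$, $x_1 < B^k$ and $\alpha < B$ force $x_1$, $\alpha$, $x_3$ and $\varepsilon$ to be unique. The condition on $\beta$ as the next digit is subtle: the equation does not bound $\beta$ explicitly, so I will argue that $\beta < B$ must hold. In the intended solutions this follows from $\beta$ being the actual next sequence term. In general, I plan to add the bound $\beta < B$ in the same exponential form as the bound on $\alpha$ if needed. Once $\beta < B$, the digits $\beta$ and $x_2$ are determined. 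The equation $3\alpha + 1 = 2^\gamma\beta$ with $\beta$ odd then forces $\gamma = \nu_2(3\alpha+1)$, and $\delta = (\beta-1)/2$. So the equation says exactly that digit $k+1$ is the odd-Collatz successor of digit $k$, and each admissible $k$ gives exactly one solution. Restricting to $k+1 \leq n$, which follows from $x < B^{n+1}$ if $\beta \neq 0$, and $\beta$ is odd, we get $0 \leq k \leq n-1$. For $x = C$ every such $k$ works, giving exactly $n$ solutions.

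For uniqueness, take any $y$ satisfying the three conditions. Since $y < B^{n+1}$, $y$ codes digits $y_0,\dots,y_n$ with $y_0 = a$. There are only $n$ possible values of $k$ and at most one solution for each, so $P = n$ forces every $k \in \{0,\dots,n-1\}$ to give a solution. Induction on $k$ then gives $y_{k+1} = (3y_k+1)/2^{\nu_2(3y_k+1)} = a(k+1)$, hence $y = C$.

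The main obstacle is the bookkeeping in the second step: confirming that the written equation really admits at most one solution per $k$ and no spurious solutions. The possible failure points are a digit $\beta \geq B$, $k \geq n$, or a free unknown such as $x_2$ not being pinned down. I would handle each one by using $y < B^{n+1}$ and the uniqueness of base-$B$ representations. If necessary, I would add an explicit condition $\beta < B$, written as $(2^{\beta+\eta+1} - 2^B)^2$, which does not change the intended count.
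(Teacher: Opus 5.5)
Your proposal is correct and follows the paper's argument. The paper only says "same proof as for the Logistic Map": at most one solution per $k$, only $n$ admissible values of $k$, so every $k$ occurs, then induction along the digits. Your existence check for $C$, the bound $a(j)\le 2^j a<B$, and the use of $\beta$ odd to force $k\le n-1$ fill in what the paper leaves implicit.

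You do not need to add a bound $\beta<B$ to the equation, which would change the term $P$. Since $\beta=2\delta+1$ is odd, the equation $3\alpha+1=2^\gamma\beta$ makes $\beta$ the odd part of $3\alpha+1$. So $\alpha$ alone determines $\beta,\gamma,\delta$, and then $x_2$, with no bound on $\beta$. In the induction, $\beta=a(k+1)<B$ holds automatically, so $\beta$ is indeed digit $k+1$.
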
 

\begin{proof}
    Same proof as for the \cref{lemma:mainlogistic}.
\end{proof} 

Also here, we better work with: 
$$\left( 2^{y + x_4 + 1} - 2^{B^{n+1}} \right)^2 = 0,$$
$$\left( 2^y - 2^a \cdot {(2^B)}^{x_1} \right)^2 = 0.$$
for the first two relations. 

One transforms the conditions of \cref{lemma:maincollatz} in an exponential Diophantine equation 
$$F(a, n)(y, \vec x) = 0$$
which has a unique solution $(y, \vec x)$. By replacing $y$ with $\beta + \gamma + 1$, we obtain an exponential Diophantine equation:
$$F(a, n)(\alpha + \beta + 1, \vec x) = 0.$$ 
By the hypercube method, one constructs the corresponding counting term $Q(a, n)$ with the property that:
$$Q(a, n) = y$$
defined above. It follows that for all $n \in \mathbb N$,
$$a(n) = \left \lfloor \frac{Q(a, n)}{\left ( 2^{n+1} a \right )^n} \right \rfloor$$ 
is the $n$-th term of the odd Collatz Sequence.

\subsection{Visualizing a Julia set}\label{Subsect.Julia}

In this section we develop an arithmetic term for deciding the color of point of the plane for visually approximating a Julia set, see Gaston Julia's original paper \cite{Julia}. We restrict ourselves to the following:

{\bf Algorithm}: {\it Given a circle of radius $R$ centered in $0 \in \mathbb C$ and a constant $c \in \mathbb C$ with $R^2 - R \geq |c|$. Consider the sequence given by the recurrence rule $z(n) = z(n-1)^2 + c$, with $z(0) \in \mathbb C$ such that $|z(0)| < R$. Choose some $N \in \mathbb N$. Compute the elements $z(n)$ for $0 \leq n \leq N$ until $|z(n)| \geq R$. If this condition is fulfilled, output $n$ as color of $z(0)$. If $z(N)$ is still in the open disk, output $N$ as color of $z(0)$.} 

\begin{lemma}
    If $|c| \leq R^2 - R$ and $|z| \geq R$, then $|z^2 + c| \geq R$.
\end{lemma}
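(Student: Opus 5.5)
The plan is to use the reverse triangle inequality together with the monotonicity of $u \mapsto u^2 - u$ on $[R, \infty)$. Since $z^2 = (z^2 + c) - c$, we have
$$|z^2 + c| \geq |z|^2 - |c|.$$
The hypothesis $|c| \leq R^2 - R$ then gives
$$|z^2 + c| \geq |z|^2 - R^2 + R.$$
It therefore suffices to show $|z|^2 - R^2 + R \geq R$, that is, $|z|^2 \geq R^2$. This follows from $|z| \geq R$ once we know $R \geq 0$, so that squaring preserves the inequality.

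The only point needing care is the sign of $R$. This is also what I expect to be the (mild) main obstacle: it is implicit in the statement, since $R$ is the radius of the circle in the Algorithm and hence $R > 0$. Moreover, $R^2 - R \geq |c| \geq 0$ forces $R \geq 1$ or $R \leq 0$, and the radius interpretation rules out the latter. So I will simply note $R \geq 1 > 0$ and conclude $|z|^2 \geq R^2$.

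Alternatively, one can write it as a single chain, $|z^2+c| \geq |z|^2 - |c| \geq |z|^2 - |z| \geq |z| \geq R$. The middle steps use the following facts:
\begin{enumerate}
\item $|c| \leq R^2 - R \leq |z|^2 - |z|$, because $u \mapsto u^2 - u$ is nondecreasing for $u \geq 1/2$ and $|z| \geq R \geq 1$;
\item $|z|^2 - |z| \geq |z|$, because $|z| \geq 2$ is not needed here: the direct bound $|z|^2 - R^2 + R \geq R$ suffices.
\end{enumerate}
I will present the first, shorter version. The consequence used afterwards is that once the orbit leaves the open disk it never returns, which justifies using the first escape index as the color.
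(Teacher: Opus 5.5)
Your first version is correct and is exactly the paper's argument, $|z^2+c|\ge |z|^2-|c|\ge R^2-(R^2-R)=R$. You rightly note that $|z|^2\ge R^2$ uses $R\ge 0$, which the paper leaves implicit; for $R\le 0$ the conclusion is trivial anyway, since $|z^2+c|\ge 0$.

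Do not keep the ``alternative'' chain as written, because its middle term $|z|^2-|z|$ is wrong. The step $|z|^2-|c|\ge |z|^2-|z|$ would need $|c|\le |z|$, which fails for $R=3$, $z=3$, $c=6$. The step $|z|^2-|z|\ge |z|$ needs $|z|\ge 2$, which fails for $R=1$, $z=1$, $c=0$. What your item~1 actually gives is $|z|^2-|c|\ge |z|^2-(|z|^2-|z|)=|z|\ge R$, a valid and slightly stronger variant.
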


\begin{proof}
    $$|z^2 + c | \geq |z^2| - |c| = |z|^2 -|c| \geq R^2 - (R^2 - R) = R.$$
\end{proof} 

This means that once escaped from the open disk, the sequence will never come back. 

As for the Logistic Map and for the Tent Map, it make sense to consider only points with rational coordinates, so to consider only elements of the field $\mathbb Q[i]$, where $i$ is the imaginary unit. Moreover, we need three integers to define such a complex point, which we write as:
$$z = \frac{x + iy}{u}$$
with $x, y, u \in \mathbb Z$, $u \neq 0$. Also, we can represent arbitrary integers only as differences of natural numbers:
$$z = \frac{(x_+ - x_-) + i(y_+ - y_-)}{u}.$$
Observe that the denominator can be always chosen to be positive. Such representation is far from being unique, but this should not represent a problem. Instead of computing one sequence $z(n)$ of elements of $\mathbb Q[i]$, we compute five sequences of natural numbers $x_+(n)$, $x_(n)$, $y_+(n)$, $y_-(n)$, $u(n)$ such that for all $n \in \mathbb N$,
$$z(n) = \frac{(x_+(n) - x_-(n)) + i(y_+(n) - y_-(n))}{u(n)},$$
with $u(0) = u$, $x_\pm(0) = x_\pm$, $y_\pm(0) = y_\pm$. Also, let $c$ be
$$c = \frac{(w_+ - w_-) + i(v_+ - v_-)}{t}.$$
We observe that 
$$z^2 = \frac{x_+^2 + x_-^2-2 x_+x_- -y_+^2 - y_-^2 + 2y_+y_- + 2i ( x_+ y_+ + x_- y_- - (x_+ y_- + x_- y_+) ) }{u^2} = $$
$$ = \frac{[(x_+^2 + x_-^2 + 2 y_+ y_-) - ( y_+^2 + y_-^2 + 2 x_+x_- ) ] + 2i [(x_+y_+ + x_- y_-) - (x_+ y_- + x_- y_+)]}{u^2}.$$
This leads to the following cross-recurrence rules:
\begin{eqnarray*}
 x_+(n+1) &=& t(x_+(n)^2 + x_-(n)^2 + 2y_+(n)y_-(n)) + u(n)^2w_+, \\
 x_-(n+1) &=& t( y_+(n)^2 + y_-(n)^2 + 2 x_+(n)x_-(n) ) + u(n)^2w_-,\\
 y_+(n+1) &=& 2t(x_+(n)y_+(n) + x_-(n) y_-(n)) + u(n)^2 v_+,\\
 y_-(n+1) &=& 2t (x_+(n) y_-(n) + x_-(n) y_+(n)) + u(n)^2 v_-, \\
 u(n+1) &=& t u(n)^2 
\end{eqnarray*}

Following the computation in \cref{Subsect.Logistic}, $u(n) = t^{2^n-1}u^{2^n}$ for all $n \in $

If $$A = 5(x_+ + x_- + y_+ + y_- + u + w_+ + w_- + v_+ + v_- + t + 1)$$
we see that for all $n \in \mathbb N$,
$$x_+(n), x_-(n), y_+(n), y_-(n), u(n) < A^{3^n}.$$
So we can take $B = A^{3^n}$ as a base to positionally represent the sequence:
$$x_+(0), x_-(0), y_+(0), y_-(0), u(0), \dots, x_+(n), x_-(n), y_+(n), y_-(n), u(n). $$

For some number $x$, we write down an equation telling that the digits in base $B$ corresponding to $x_+(k+1)$, $x_-(k+1)$, $y_+(k+1)$, $y_-(k+1)$, $u(k+1)$ depend on the digits corresponding to $x_+(k)$, $x_-(k)$, $y_+(k)$, $y_-(k)$, $u(k)$ as the recurrence rules say. An arithmetic term:
$$T(x, w_+, w_-, v_+, v_-, t)$$
counts the number of values of $k$ for which $x$ satisfies this conditions. The condition:
$$(x_+,x_-,y_+y_-,u)(0) = (x_+,x_-,y_+y_-,u) \wedge T(x,w_+, w_-, v_+, v_-, t) = n \wedge x < B^{5(n+1)} $$
is satisfied by only one natural number $x$, namely the natural number encoding the aforementioned sequence. By replacing $x$ with $a + b + 1$, we construct the counting solutions term:
$$W(n, x_+,x_-,y_+,y_-,u, w_+, w_-, v_+, v_-, t)  $$
which computes $x$. Now suppose that the escape radius
$$R = \frac{a}{b}$$
is a positive rational number. One can construct a counting term
$$V(a,b,x, B)$$
which says for how many segments of length $5$ in $x$, the complex number $z_k$ represented by the tuple of natural numbers $x_+(k), x_(k), y_+(k), y_-(k), u(k)$ satisfies $|z(k)| < R$. It follows that the term:
$$V(a,b,W(N, x_+,x_-,y_+,y_-,u, w_+, w_-, v_+, v_-, t),B)  $$
computes the color of the point $z(0)$ in the corresponding Julia set. If one puts the initial term $z(0) = 0$ and considers the point $c$ as a variable, the same arithmetic term produces the coloring of the Mandelbrot set:
$$V(2,1,W(N, 0,0,0,0,1, w_+, w_-, v_+, v_-, t),B). $$ 

\section{A Turing complete term} \label{Section.TCT} 

A further application of \cref{metatheorem} will be a Turing complete arithmetic term. Turing machines are ideal computation devices defined the first time by Alan Turing in his seminal paper \cite{Turing}. Church Thesis states that the correct notion of a computable function means computable by a Turing machine. We explain now what does it mean to apply \cref{metatheorem} for Turing machines. Suppose one has a deterministic Turing machine $T$. Let us encode some Turing machine configuration $c$ in a natural number $n(c)$ in some canonic computable way. Then there is a computable function $f: \mathbb N \rightarrow \mathbb N$ such that $f(n(c)) = n(c')$ if and only if the configuration $c'$ follows from $c$ in one step. Moreover, the function $f$ can be expressed by an arithmetic term, being a finite case-disjunction. Let $n_0$ be the natural number expressing the start configuration of $T$. The sequence with initial term $n_0$ and recurrence rule:
$$n_{i+1} = f(n_i)$$
encodes the sequence of successive configurations of the Turing machine. By our \cref{metatheorem}, there is an arithmetic term $F(n)$ such that for all $i \in \mathbb N$,
$$n_i = F(i).$$
We will see in this section that one can construct a term $F(\tau, c_0, i)$ such that for any deterministic Turing machine encoded by $\tau$, for every start configuration $c_0$ and for every $i \in \mathbb N$, $F(\tau, c_0, i)$ is the configuration of this Turing machine after step number $i$, with $F(\tau, c_0, 0) = c_0$. 

Suppose now, that the machine $T$ will stop after a number of steps which is bounded by a number $t(n_0)$, and the function $t(n)$ is computable. The configurations and the transition function can be normed such that if $c$ is a final configuration, then the next configuration is $c' = c$. If the Turing machine computes some function $\varphi(n)$, then we can extract the value $\varphi(n)$ from the final configuration $F(\tau, c_0(n), t(c_0(n))$, and this extraction can be done composing $F$ with an arithmetic term. Finally, we will get a uniform way to express all Kalmar elementary functions by a universal arithmetic term. 

We state the following specifications:
 
\begin{enumerate} 
    \item For simplicity we are modeling deterministic Turing machines with one tape  which {\it never step into the negative side of the tape}. This can be done by adding some new letters and some new states, see Arora and Barak \cite{AroraBarak}, Chapter 1. 
    \item The machine is $(Z, \Gamma, z_0, \Box, E)$. We will always encode $z_0$ and the blank symbol $\Box$ with $0$.
    \item The machine always starts in state $z_0$. The head sees the first symbol of the input in cell nr. $0$. 
    \item We build a term $T(c, d, P, x, l, n)$ which computes the code of the final configuration of the Turing machine with program encoded in $P$ for the input encoded in $x$ after running $n$ steps. This will be the Turing Universal Term. The integers $c$ and $d$ are only  auxiliary variables to read the program $P$. Namely, the number of states $|Z|$ and the number of tape characters $|\Gamma|$ are both strictly smaller than $2^c$.  The number $l$ is the length of the input. This can be found by various exponential Diophantine equations, or by directly using the term $\lfloor \log_{2^c}  x \rfloor$, but the number $l$ will be a constant when making a computation. So, as long as we look for an arithmetic term, which is not so complicated, we may take it as an input variable. Also, $\delta$ consists of $d$ many lines. Possibly $d = |Z| \cdot |\Gamma|$. 
    \item For getting $T(c, d,  P, x, l, n)$ we construct another term $R(c, d, P, x, l,  n)$ which produces a number $y$ encoding positionally the sequence of all $n+1$ configurations of the machine, from the start configuration to the configuration after the step number $n$. We call $R(c, d, P, x, l,  n)$ the {\bf run} of the machine. 
    \item The program $P$ is a number that positionally encodes the function $\delta$ using the numeration basis $2^c$. A $\delta$-instruction is a $5$-tuple
    $$(z, a, z', a', m)$$
    where $\delta(z,a) = (z', a', m)$, $z, z' \in Z$, $a,a' \in \Gamma$, $0 \leq m \leq 2$. We always take $c \geq \max(2, |Z|, |\Gamma|)$. In the numeration base $2^c$, $P$ has length $5|\delta| = 5d$. So $P < 2^{5cd}$. We interpret $m=0$ as no move, $m = 1$ as move one cell to the right and $m = 2$ as move one cell to the left. For $0 \leq j \leq d$ one has $\pi(P, 2^c, 5j) = 2^z$, $\pi(P, 2^c, 5j+1) = 2^a$, etc.
    \item For $0 \leq k \leq n$, the machine configuration number $k$ is the triple:
    $$\iota_k = (z_k, p_k, x_k).$$
    Here $z_k$ is the state of the Turing machine after $k$ steps, $p_k$ is the position of the head after $k$ steps, and $x_k$ is the code of the tape content after $k$ steps, written as number in basis $2^c$. As the blank symbol is encoded by $0$, this content is always a natural number. We observe that $\iota_0 =(0,0,x)$, where $x$ is the input of the machine. We observe that the length of $x_k$ is $\leq l + n$ so the number $x_k < 2^{c(n+l)}$. We will use the base $2^{c(n+l)}$ to positionally encode the machine configurations. Observe that the number $c(n+l)$ will be composed by parameters in the first equation, so is constant. 
    \item The word $w = a_0 \dots a_t$ is expressed by the number:
    $$ \sum _{k=0}^t 2^{ck + a_k}.$$
    \item The positional code of the sequence of machine configurations $\iota_0, \dots, \iota_n$ is a number $y < 2^{3cn(n+l)}$. 
\end{enumerate} 

The following conditions say that the numbers $v$ and $v'$ encode possible tape-contents, the numbers $z$ and $z'$ encode possible states and the numbers $a$ and $a'$ encode possible elements of the alphabet $\Gamma$:
\begin{eqnarray}\label{conditions}
    v &<& 2^{c(n+l)} ,\\
    v'&<& 2^{c(n+l)} ,\\
    2^z &<& 2^c ,\\
    2^{z'} &<& 2^c,\\
    2^a &<& 2^c,\\
    2^{a'} &<& 2^c.
\end{eqnarray}
The following conditions say that the $y$ contains two possible machine-configurations $\iota_k = (z, p, v)$ and $\iota_{k+1} = (z', p', v')$, and that they occur consecutively. The states $z, z'$ are written as $2^z, 2^{z'}$, and the same is done with the letters $a, a'$. The positions $p, p' \leq n < c(n +l)$ so they can be also represented by the numbers $2^p, 2^{p'}$. This is expressed by the equation:
\begin{equation}\label{eqtwoconfigurations1}
    y = x_2 + 2^{3kc(n+l) + z} + 2^{(3k+1)c(n+l) + p} + 2^{(3k+2)c(n+l)}\cdot v + 
\end{equation} 
\begin{equation}\label{eqtwoconfigurations2}
    + 2^{(3k+3)c(n+l) + z'} + 2^{(3k+4)c(n+l) + p'} + 2^{(3k+5)c(n+l)}\cdot v'+
    2^{(3k+6)c(n+l)}\cdot x_3 
\end{equation} 
together with the condition:
\begin{equation}\label{eqconditionx2}
    x_2 < 2^{3kc(n+l)}
\end{equation}
The following equation, together with the next condition,  says that the head sees the letter $a \in \Gamma$:
\begin{equation}\label{eqreadtheletter}
    v = x_4 + 2^{pc+a} + 2^{(p+1)c} \cdot x_5,
\end{equation} 
\begin{equation}\label{eqconditionx4}
    x_4 < 2^{pc}.
\end{equation}
The following equation, together with the following condition, expresses the fact that the program-line $\delta(z,a) = (z', a', m)$ is in the program at line $j$:
\begin{equation}\label{eqprogramline}
    P = x_6 + 2^{5jc+z} + 2^{(5j+1)c+a} + 2^{(5j+2)c+z'} + 2^{(5j+3)c+a'} + 2^{(5j+4)c+m}+ 2^{5(j+1)c} \cdot x_7,
\end{equation} 
\begin{equation}\label{eqconditionx6}
    x_6 < 2^{5jc},
\end{equation} 
\begin{equation}\label{eqconditionj}
    j < d.
\end{equation}
But $p$ and $p'$ are related by the movement $m$ as follows:
\begin{eqnarray}\label{nextpositionexplicitly}
[m^2 + (p'-p)^2][(m-1)^2 + (p'- p - 1)^2][(m-2)^2 + (p' - p + 1)^2] = 0.
\end{eqnarray} 
The new tape-content should be:
\begin{eqnarray}\label{nextcontentexplicitly}
    v' &=& v - a \cdot 2^{pc} + a' \cdot 2^{pc}.
\end{eqnarray} 

This system of equations will now be rewritten such that: 

- Every equation becomes an equality where the left-hand side is $\geq 0$.

- As much as possible is expressed by exponential monomial instead of polynomial monomials. 

- We do not introduce more new unknowns as necessary. 

\begin{eqnarray}\label{conditionssquared}
    (v + x_8 + 1 -  2^{c(n+l)})^2 &=& 0 ,\\
    (v' + x_9 + 1 -  2^{c(n+l)})^2 &=& 0 ,\\
    (2^z + x_{10} + 1 - 2^c)^2 &=& 0 ,\\
    (2^{z'} + x_{11} + 1 - 2^c)^2 &=& 0 ,\\
    (2^{a} + x_{12} + 1 - 2^c)^2 &=& 0 ,\\
    (2^{a'} + x_{13} + 1 - 2^c)^2 &=& 0 ,\\
    (x_2 + x_{14} + 1 - 2^{3kc(n+l)})^2 &=& 0 ,\\
     (x_4 + x_{15} + 1 - 2^{pc})^2 &=& 0 ,\\ 
     (x_6 + x_{16} + 1 - 2^{5jc})^2 &=& 0 ,\\
     (2^j + x_{17} + 1 - 2^d)^2 &=& 0,
\end{eqnarray}
\begin{equation}\label{eqtwoconfigurations3}
    [x_2 + 2^{3kc(n+l) + z} + 2^{(3k+1)c(n+l) + p} + 2^{(3k+2)c(n+l)}\cdot v + 
\end{equation} 
\begin{equation}\label{eqtwoconfigurations4}
    + 2^{(3k+3)c(n+l) + z'} + 2^{(3k+4)c(n+l) + p'} + 2^{(3k+5)c(n+l)}\cdot v'+
    2^{(3k+6)c(n+l)}\cdot x_3 - y ]^2 = 0
\end{equation} 
\begin{equation}\label{eqreadtheletter1}
    ( x_4 + 2^{pc+a} + 2^{(p+1)c} \cdot x_5 - v)=0,
\end{equation} 
\begin{equation}\label{part3}
[(2^m - 1)^2 + (2^{p'}-2^{p})^2][(2^m-2)^2 + (2^{p'}- 2^{p + 1})^2][(2^m-4)^2 + (2^{p'+1} - 2^p)^2] = 0,
\end{equation} 
\begin{equation}\label{part4}
    (x_{1} - 2^{a + pc})^2 + (x_{18} - 2^{a'+pc})^2 + (2^{v'+x_1} - 2^{v + x_{18}})^2 = 0.
\end{equation} 

Parameters: $c, d, P, l, n, y$.

Unknowns: $k, z, p, v, a, j,  z', a', m, p', v', x_1, \dots, x_{18} $. 

We denote this exponential Diophantine equation in $6$ parameters and $29$ unknowns
$$E(c, d, P, l, n, y)( k, \vec \lambda) = 0.$$

By Mazzanti, there is an arithmetic term
$N(c, d, P, l, n, y)$ computing the number of tuples $(k, \vec \lambda)$ which are solutions of the equation.  

\begin{lemma} If $P < 2^{5cd}$ is a correct determinist program of length $d$ of a Turing machine, and if $x$ of length $l$ is the input of this machine, then the system of conditions
$$y < 2^{3c(n+1)(n+l)} \,\, \wedge \,\,y = 2^{2c(n+l)} x + 2^{3c(n+l)} x_1\,\, \wedge \,\, N(c, d, P, l, n, y) = n $$
has a unique solution $y$. Moreover, this solution encodes the sequence of all machine configurations from $\iota_0 = (0,0,x)$ to $\iota_n = (z_n, p_n, x_n)$. 
\end{lemma}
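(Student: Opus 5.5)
The plan follows the pattern of \cref{lemma:mainlogistic} and \cref{lemma:maincollatz}, adapted to configuration triples. First, existence: let $y^\ast$ be the positional code, in base $2^{c(n+l)}$, of the true run $\iota_0,\dots,\iota_n$ of the machine on input $x$. Each configuration occupies three consecutive digits. The state and head position are stored as $2^{z_k}$ and $2^{p_k}$, and the tape content $x_k$ is stored directly. Since the head never enters the negative side and moves at most one cell per step, the bounds $x_k<2^{c(n+l)}$ and $2^{p_k}\le 2^n<2^{c(n+l)}$ from the specifications hold. Hence $y^\ast<2^{3c(n+1)(n+l)}$. Because $\iota_0=(0,0,x)$ and $2^0=1$ in the two low digits, the low part of $y^\ast$ has the prescribed shape. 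I would read the second condition as pinning the first configuration, i.e. $y\equiv 1+2^{c(n+l)}+2^{2c(n+l)}x \pmod{2^{3c(n+l)}}$, and silently adjust the written form to this. For each $k\in\{0,\dots,n-1\}$, I then exhibit one solution $(k,\vec\lambda)$ of $E(c,d,P,l,n,y^\ast)=0$. Take $(z,p,v)=\iota_k$ and $(z',p',v')=\iota_{k+1}$. Take $a$ to be the letter under the head, and $j$ the unique program line with $\delta(z,a)=(z',a',m)$. The auxiliary $x_i$ are forced as quotients, remainders and differences.

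Second, uniqueness of the solution for each $k$. This is the main bookkeeping step, and it is where I expect the real obstacle. I must show that for fixed $y$ and $k$ the tuple $\vec\lambda$ is unique, and that $k\ge n$ gives no solutions. For a fixed $k$, the digit decomposition \eqref{eqtwoconfigurations3}--\eqref{eqtwoconfigurations4} together with the bound on $x_2$ and the bounds $2^z,2^{z'}<2^c\le 2^{c(n+l)}$ determines $z,p,v,z',p',v',x_2,x_3$. Here $x_3$ is forced to be $0$ when $k=n-1$, and there is no solution for $k\ge n$ because $y<2^{3c(n+1)(n+l)}$ forces the relevant digits to vanish, while a state digit must be a power of two. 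The point to check carefully is that each digit is a single power of two, so that $z$ and $p$ are recovered via $\nu_2$-type uniqueness. Next, \eqref{eqreadtheletter1} and the bound on $x_4$ determine $a$. Determinism of $P$, meaning at most one line per pair $(z,a)$, together with the base-$2^c$ digit structure of $P$ and $j<d$, determines $j$, $a'$ and $m$. Finally, \eqref{part3} and \eqref{part4} determine the remaining $x_i$. I would also check that \eqref{part3} and \eqref{part4} correctly encode the head move and the tape update for the chosen letter encoding $2^{pc+a}$. If they do not, I would repair the equation rather than the argument.

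Third, conclude as in \cref{lemma:mainlogistic}. Any $y$ satisfying the three conditions has at most one solution per $k\in\{0,\dots,n-1\}$, so $N=n$ forces a solution for every such $k$. A solution at $k$ certifies that digit-block $k+1$ is exactly the successor configuration of digit-block $k$ under $\delta$, provided block $k$ is a legitimate configuration; note that a solution requires both blocks to be well-formed. By induction on $k$, starting from the pinned $\iota_0$, every block of $y$ equals the true $\iota_k$. Thus $y=y^\ast$, which gives both uniqueness and the encoding claim. Halting configurations need the convention from the text that they step to themselves, so I would assume $\delta$ is completed accordingly so that a successor always exists.
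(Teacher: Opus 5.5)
Your proposal is correct and follows essentially the paper's route. For fixed $y$ and $k$ the solution is unique through the chain $k\mapsto(z,p,v)\mapsto a\mapsto(z',a',m)\mapsto(p',v')$ and then the auxiliaries; the length bound confines $k$ to $\{0,\dots,n-1\}$, so $N=n$ forces a valid transition at every $k$; and induction from $\iota_0$ identifies every block of $y$ with the true run.

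You also go beyond the paper's proof in two ways. First, you check existence, i.e.\ that the code of the true run really satisfies $N=n$. Second, you correctly note that the initial condition must pin the low digits to $1+2^{c(n+l)}+2^{2c(n+l)}x$, because $\iota_0$ stores its state and head position as $2^0=1$. For the same reason, blank cells the head may visit must be present in $x$ as digits $2^0=1$, since \eqref{eqreadtheletter1} cannot read a zero digit.
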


\begin{proof} By the length condition, $y$ can encode at most $n+1$ configurations. By the initial condition given in the statement, the stack of configurations starts with $\iota_0$, which is the start configuration of the Turing machine. One cannot have two different solutions with the same $k$, because $k$ determines uniquely $p, z, v$, $p$ and $v$ determine uniquely $a$, $z$ and $a$ determine uniquely $z'$, $a'$ and $m$, and these values determine uniquely $p'$ and $v'$. Also, the values of the auxiliary unknowns $x_{1}, \dots, x_{18}$ are uniquely determined by these. As there are exactly $n$ solutions and as $0 \leq k \leq n-1$ by the length condition, for any of these values of $k$ there is a unique solution representing the transition of the deterministic Turing machine from the configuration $\iota_k$ to the configuration $\iota_{k+1}$. By induction, all these configurations are uniquely determined and are exactly the Turing machine configurations at step $k$ when starting with input $x$.
\end{proof}

So we transform these conditions in an exponential Diophantine equation where we also replace $y$ with $\alpha + \beta + 1$. For this equation:
$$F(c, d, P, x, l, n, \alpha + \beta + 1, \vec \epsilon) = 0$$
we compute by Mazzanti the term $M(c, d, P, x, l, n)$. This term computes the value of $y$. By projecting $y$ on its last segment, we find the configuration after $n$ steps. 

Farther one writes down the term $N(c, d, P, l, n, y)$ corresponding to the equation constructed above, then one builds the equation $F(c, d, P, x, l, n, \alpha + \beta + 1, \vec \epsilon) = 0$ equivalent to:
$$\alpha + \beta + 1 < 2^{3cn(n+l)} \,\, \wedge \,\, \alpha + \beta + 1 = 2^{2c(n+l)} x + 2^{3c(n+l)} x_1\,\,\wedge \,\, F(c, d, P, l, n, a + b + 1) = n, $$
and one constructs the term:
$$M(c, d, P, x, l, n) $$
that computes the number $y$, which is the code of the succession of the configurations $\iota_0, \iota_1, \dots, \iota_n$.

 Now we speak about applications of this term. We consider normed deterministic Turing machines with the following conventions:
 \begin{enumerate}
     \item The machine starts on the first digit of the input, in cell number $0$, being in the state $z_0$.
     \item The machine ends on the first digit of the output, in cell number $0$. No other symbols are written on the tape, excepting the output. 
     \item All final states are states $z_e$ such that for all letters $\gamma \in \Gamma$,
     $$\delta(z_e, \gamma) = (z_e, \gamma, 0).$$
     So, if $n$ is bigger or equal the computation time for the output, than the tape content of the machine configuration after $n$ steps will be the output. 
 \end{enumerate}

 We observe that the term:
 $$O(c, d, P, x, l, n) := \left \lfloor \frac{M(c, d, P, x, l, n)}{2^{(3n+2)c(n+l)}} \right \rfloor $$
outputs the tape content after $n$ steps. Suppose that we consider a computable total function 
$$f : \mathbb N^s \rightarrow \mathbb N$$
and that the computation time for input $a \in \mathbb N^s$ is bounded by a function $T(a)$. Let $P$ be the code of the corresponding Turing machine. The parameter $d$ can be computed from $P$ by an arithmetic term $d(P,c)$, differently from $c$, which still can be given. This can be also repaired by encoding the pair $(c, P)$ in a number $C$ using Cantor's pairing. Also, the input of the machine can be computed by an arithmetic term $x(a, c)$ and its length can be computed by a term $l(x, c)$. The output of the machine can be translated into numbers by another arithmetic term $b(o)$. Altogether our function has the arithmetic expression:
$$f(a) = b(O(c, d(P), P, x(a,c), l(x(a,c),c), T(a)). $$
If we also encode the pair in $C = [c\mid  P]$ by Cantor's, we find the expression:
$$f(a) = b(O(L(C), d(R(C)), R(C), x(a,L(C)), l(x(a,L(C)),L(C)), T(a)) = \Upsilon(C, a, T(a)),$$
where $\Upsilon(C, a, t)$ is the Turing complete term. So we proved:

\begin{theorem}
    There is a Turing complete term $\Upsilon(C, a, T(a))$ which gets a program $C$ to compute a function $f$, an input $a$ and an upper bound $T(a)$ for the computation time of $C$ for input $a$ and computes in a finite and fixed number of arithmetic operations the value $f(a)$. 
    In particular, a function is Kalmar elementary if and only if its computation time is Kalmar elementary. 
\end{theorem}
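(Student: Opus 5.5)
The first part of the statement is mostly assembled in the text just before it; what remains is to make the construction uniform and then prove the Kalmar characterization. I would begin by fixing the term $M(c,d,P,x,l,n)$ built from the counting term $N(c,d,P,l,n,y)$ via the preceding lemma. The lemma gives a unique $y$ satisfying the three conditions, and the substitution $y=\alpha+\beta+1$ turns that unique $y$ into a solution count. Hence $M$ is an arithmetic term equal to the code of the run $\iota_0,\dots,\iota_n$. To obtain the output, I would compose $M$ with the projection $O$ onto the last tape segment. By the normalization conventions (final states loop via $\delta(z_e,\gamma)=(z_e,\gamma,0)$, and only the output remains on the tape), $O$ equals the output for every $n\ge$ the running time. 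The auxiliary quantities $d(P,c)$, $x(a,c)$, $l(x,c)$ and the decoding $b(o)$ are elementary, hence arithmetic terms by Mazzanti's theorem; they could also be written explicitly with $\lfloor\log_{2^c}\cdot\rfloor$-type terms. Packing $(c,P)$ with the pairing function of the first lemma gives $\Upsilon(C,a,t)$, a single fixed term. Because it is one term, evaluating it always takes the same fixed number of operations, whatever $C,a,t$ are.

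For the characterization, I would prove two directions. ($\Leftarrow$) Suppose $f$ is computed by some machine $C$ whose running time is bounded by a Kalmar elementary $T$. Mazzanti's theorem makes $T$ an arithmetic term. Then $f(a)=\Upsilon(C,a,T(a))$ is a composition of arithmetic terms, which is Kalmar elementary because $\mathcal E^3$ is closed under composition. ($\Rightarrow$) Suppose $f$ is elementary. I would invoke the classical fact that every elementary function is computed by a Turing machine in time bounded by an elementary function, e.g.\ by induction over the generating operations, or by the characterization of $\mathcal E^3$ as the functions computable in time bounded by iterated exponentials of fixed height. That step is standard and I would cite it rather than reprove it.

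The main obstacle is not either direction of the equivalence but the correctness of the uniformity claim inside $\Upsilon$. The bounds used in the hypercube method, namely $t$ and $w$ from Claims 1 and 2 in the proof of \cref{metatheorem}, must be arithmetic terms in the parameters $(c,d,P,l,n)$ alone and must not depend on the particular machine. I would check this by noting that every unknown in the big equation is bounded by an expression such as $2^{3c(n+1)(n+l)}$, $2^{5cd}$ or $2^c$, all built from the parameters. I would also verify that a malformed $P$ (non-deterministic, or not of length $d$) cannot break uniqueness in the cases we use. This last point is harmless because $\Upsilon$ is only claimed correct on valid codes.
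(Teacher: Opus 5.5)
Your proposal is correct and follows the paper's route. It builds the counting term $N$, obtains the run term $M$ by substituting $y=\alpha+\beta+1$ into the uniquely solvable system of the preceding lemma, projects to $O$ under the halting conventions, adds the elementary input/output conversions, and Cantor-pairs $(c,P)$ into the single fixed term $\Upsilon$; the paper leaves the Kalmar characterization as a bare ``in particular'', and your split into ($\Leftarrow$) via $f(a)=\Upsilon(C,a,T(a))$ with closure of $\mathcal E^3$ under composition, and ($\Rightarrow$) by citing the classical elementary-time bound (which, as you rightly note, does not come from $\Upsilon$), is the natural way to make it explicit.
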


A very convenient case arises for Kalmar functions $f : \mathbb N \rightarrow \mathbb N$ if we apply the fact that there is always a Turing machine able to input the argument $a \in \mathbb N$ written unary and to output the result $f(a)$ also written unary. Recall that the unary writing means only to repeat one symbol $a$ times, for example $6_{(1)} = 111111$. The symbol $"1"$ will be encoded as $10\dots 0$, which is a word of length $c$. So the encoding term is:
$$x(a, c) = \sum_{k=0}^{a-1}2^{ck} = \left \lfloor \frac{2^{ca} - 1}{2^c - 1} \right \rfloor,$$
$$l(x(a,c), c) = ac,$$
and the translation of the output is 
$$b(o) = \HW(o) = \left \lfloor \frac{\lfloor \log_2 (o + 1) \rfloor}{c} \right \rfloor.$$
With these details, the universal Turing function of one argument is completely described.

\begin{cor}\label{cor.representability}
Suppose that a function $f : \mathbb N^k \rightarrow \mathbb N$ can be computed by a deterministic Turing machine in time bounded by a function $T: \mathbb N^k \rightarrow \mathbb N $. Then $f$ is expressible by an arithmetic term built up using the functions $x+y$, $x \bmod y$, $2^x$ and $T(\vec x)$. If $T(\vec x)$ is expressible by a classic arithmetic term, say an iterated exponential 
$$T(x_1, \dots, x_k) = 2^{\reflectbox{$\ddots$}^{2^{x_1 + \dots + x_k}}},$$
then the function $f$ can be expressed by a term using only $x+y$, $x \bmod y$ and $2^x$.  
\end{cor}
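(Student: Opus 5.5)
The plan is to read the corollary off the theorem just proved, i.e. the representation
$$f(\vec a) = b(O(c, d(P), P, x(\vec a,c), l(x(\vec a,c),c), T(\vec a))),$$
and then to check that every ingredient lies in the substitution basis $\langle x+y,\ x \bmod y,\ 2^x\rangle$.

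First, fix a deterministic Turing machine computing $f$ within time $T(\vec a)$. Normalize it as in \cref{Section.TCT}: one tape that is never left on the negative side, a final state $z_e$ with $\delta(z_e,\gamma)=(z_e,\gamma,0)$, and output left in cell $0$ with nothing else on the tape. Then fix constants $c$, $d$ and the program code $P$. Because the machine is fixed, $c$, $d$ and $P$ are numerals, so the auxiliary maps $d(P,c)$ and the Cantor decoding are not needed at all.

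Next, take the term $M(c,d,P,x,l,n)$ produced by the hypercube method from the equation $F$. This is an arithmetic term in the wider sense, built from $+$, $\dotdiv$, $\cdot$, $\lfloor x/y\rfloor$, $\bmod$, $2^x$, $x^y$, $G_r$, $\HW$ and $\nu_2$. The input encoding $x(\vec a,c)$, the length $l$ and the output decoding $b(o)$ are Kalmar elementary and are also given by explicit terms; for $k$ arguments, concatenate the unary blocks separated by a fresh symbol. Substituting $n := T(\vec a)$ gives $f$ as a term over these operations together with $T$. The key observation is that all the operations other than $T$ are Kalmar elementary functions that do not depend on $T$. By the minimal basis result of \cite{prunescusaurasshuniaminimal}, $\langle x+y,\ x\bmod y,\ 2^x\rangle=\mathcal E^3$, so each of them can be rewritten as a composition of $x+y$, $x\bmod y$ and $2^x$. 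Composing these rewritings yields a term in $x+y$, $x\bmod y$, $2^x$ and $T$, which is the first claim.

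For the second claim, an iterated exponential of fixed height applied to $x_1+\dots+x_k$ is itself a term in $+$ and $2^x$, so substituting it for $T$ removes $T$ entirely. More generally, any $T$ given by a classic arithmetic term can be re-expressed in the minimal basis by the same theorem.

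I expect the main obstacle to be correctness rather than syntax. The term $M$ must return the actual run code for every input, and not only when its hypercube bounds are crude. So I must check that the bounds $t$ and $w$ of Claims 1 and 2 can be chosen as terms in the parameters $c,d,P,l,n$, and that the base $2^{c(n+l)}$ dominates every configuration entry when $n=T(\vec a)$. This amounts to verifying that positions stay below $n+l<c(n+l)$ and that tape contents stay below $2^{c(n+l)}$, and I would check both directly from the encoding. A further subtlety is that the minimal basis lacks $\dotdiv$ and $\lfloor x/y\rfloor$ as primitives. Rather than rebuild them by hand, I would rely on the equality of clones $\mathcal E^3$ cited above.
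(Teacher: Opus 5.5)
Your proposal is correct and is essentially the paper's own argument: the corollary is read off from the Turing-complete representation $f(\vec a)=b(O(c,d,P,x(\vec a,c),l,T(\vec a)))$, every ingredient other than $T$ is Kalmar elementary and so lies in $\langle x+y,\ x\bmod y,\ 2^x\rangle=\mathcal E^3$ by the minimal-basis theorem, and a fixed-height tower of $2$'s over $x_1+\dots+x_k$ is itself such a term. One bookkeeping point that you, like the paper, pass over is that normalizing the machine (one-sided tape, cleaned output in cell $0$, looping final state, your chosen input/output encoding) can increase its running time, so for $n$ you should substitute an elementary bound $p(T(\vec a),\vec a)$ on the normalized machine's running time rather than $T(\vec a)$ itself; this is still a term in $x+y$, $x\bmod y$, $2^x$ and $T$.
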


\section{A simpler universal model: goto programs} \label{Section.Gototerm}

Although the one-tape Turing machine construction of \cref{Section.TCT} proves full universality, actually writing programs in that model is excruciatingly painful. We now lift the simulation to a much more readable and familiar computational model: goto programs operating on registers via increment, decrement, and zero-testing.

The possibility to apply \cref{metatheorem} to register machines can be justified in a similar way as already done for Turing machines. We will define below the notion of configuration of a deterministic register machine. The configuration consists of the content of the $n$ registers and the label of the program-line to be applied next.  Every configuration $c$ of the register machine can be encoded in a natural number $n(c)$ and there is an arithmetic term $f$ such that the configuration $c'$ follows from the configuration $c$ in one step if and only if $f(n(c)) = n(c')$. Like before, it follows that there exist an arithmetic term which computes directly the configuration after $i$ computation steps. In this section, we will show that we can construct a uniform term, which simulates every possible register machine and outputs the configuration after an arbitrary number of steps. 

It is well known that goto programs are Turing complete \cite{Schoning2008}. Each line is labeled, and the instruction set consists of:

$(L)\,\, x_i := x_j + 1;$

$(L)\,\, x_i := x_j \dot{-} 1;$

$(L) \textrm{ if } x_i = 0 \textrm{ goto } L_1;$

$(L) \textrm{ goto } L_2;$

$(L) \textrm{ stop};$ 

Let $m$ be the number of lines of a goto program. All labels vary from $1$ to $m$. Also, there are $n-1$ registers $x_1$, $\dots$, $x_{n-1}$. Any register may contain only a natural number. The configuration of the goto program at a given moment is the vector:
$$\iota = (L, x_1, \dots, x_{n-1})$$
representing the actual label $L$ and the content of the different registers, before the line $L$ is performed. 

Copy commands will be represented as:
$$(L)\,\,(i, j, s, 0, 0, 0).$$
Here $s \in \{0, 1, 2\}$ and the meaning is:
$$(L)\,\,x_i := (x_j + s) \dot{-} 1;$$

Goto commands are represented as:
$$(L)\,\,(0, 0, 0, u, L_1, L_2),$$
and the meaning is 
$$(L)\,\,\textrm{ if } x_u = 0 \textrm{ then goto } L_1 \textrm{ else goto } L_2;$$
If one has $L_1=L_2$, this represents the unconditional goto command. 

The stop command is represented as:
$$(L)\,\,(0, 0, 0, 0, 0, 0).$$

In order to encode a program in a number $P$ we use the base $b = n + m + 1$. The command line:
$$(i, j, s, u, L_1, L_2)$$
is encoded by the number:
$$c = i + j b + s b^2 + u b^3 + L_1 b^4 + L_2 b^5.$$
The program is encoded by the number:
$$P = \sum _{L = 0}^{m-1} c_L b^{6L}.$$
A number $x$ will encode the sequence of all configurations from $\iota_0 = (1, x_1^{(0)}, x_2^{(0)}, \dots, x_n^{(0)})$ to $\iota_t = (L^{(t)}, x_1^{(t)}, x_2^{(t)}, \dots, x_{n-1}^{(t)})$. In order to encode configurations, we need a basis $B$ larger than all labels and all register contents. Because performing one command, the content of some register can increase with at most $1$, $B = m + \max(\iota_0) + t $ is large enough. A configuration 
$$\iota = (L, x_1, \dots, x_{n-1})$$
is well encoded by:
$$c_\iota = L + x_1 B + \dots + x_{n-1} B^{n-1}$$
and the sequence of configurations $\iota_0, \iota_1, \dots, \iota_t$ is encoded by:
$$x = c_{\iota_0} + c_{\iota_1} B^n + \dots + c_{\iota_t} B^{nt}.$$
We will construct an arithmetic term
$T(m,n,P,\max,t)$
that computes $x$, where $\max = \max(\iota_0)$. To this end, we write down exponential Diophantine equations expressing the fact that for some $k$, the segment of $x$ corresponding to the $k$-th configuration and the segment of $x$ corresponding to the $k+1$-th  configuration are related according to the corresponding line of the program $P$.  

The configuration encoded by $c$ verifies the equations:
$$x = y_1 + c B^{nk} + y_2 B^{n(k+1)},$$
$$c < B^n,$$
$$y_1 < B^{nk}.$$

The label $L$ of the program-line of the configuration is given by the equations:
$$c = L + By_3,$$
$$L < b.$$

This program-line is extracted from $P$:
$$ P = y_4 + i b^{6L} + j b^{6L+1} + s b^{6L+2} + u b^{6L+3} + L_1 b^{6L+4} + L_2 b^{6L+5} + y_5 b^{6(L+1)},$$
$$i < b,\,\,j < b,\,\,s < 3,\,\,u < b,\,\,L_1<b,\,\,L_2 < b,$$
$$y_4 < b^{6L}.$$

Let $c'$ be the configuration resulting from $c$ by applying the program-line. This configuration verifies the equations:
$$x = y_6 + c' B^{n(k+1)} + y_7 B^{n(k+2)},$$
$$c' < B^n,$$
$$y_6 < B^{n(k+1)}.$$

The connection between $c$ and $c'$ is a disjunction of the following situations. 
\begin{enumerate}
  \item If $u=0$ and $i \neq 0$, then we are dealing with a copy-command, $x_i := (x_j+s)\dot{-}1$. 
  \begin{enumerate}
    \item If $|i - j| > 0$ then:
    $$ c = y_8 + e B^i + y_9 B^{\min(i,j)+1} + f B^j + y_{10} B^{\max(i,j)+1},$$ 
    $$y_8 < B^{\min(i,j)},\,\,e < B,\,\, y_9 < B^{|i-j|-1},\,\,f < B.$$
    If $f + s > 0$ then:
    $$c' = c - eB^i + (f + s - 1)B^i + 1,$$
    where the last $+1$ increases the label of the current program-line.
    
    If $f+s=0$ then:
    $$c' = c - eB^i + 1.$$

    \item If $i = j$ then:
    $$ c = y_8 + e B^i + y_9 B^{i+1},$$ 
    $$y_8 < B^i,\,\,e < B.$$
    If $e + s > 0$ then:
    $$c' = c + (s-1) B^i + 1.$$
    If $e+s = 0$ then:
    $$ c' = c+1.$$
  \end{enumerate}
  \item If $i = 0$ and $u \neq 0$, then we are dealing with a goto-command. In this case, 
  $$ c = y_8 + e B^u + y_9 B^{u+1},$$ 
  $$y_8 < B^u,\,\,e < B.$$
  If $e = 0$ then:
  $$c' = c - L + L_1.$$
  If $e > 0$ then
  $$c' = c - L + L_2.$$
  \item If $i = 0$ and $u=0$, then we are dealing with a stop command. In this case,
  $$c' = c.$$
\end{enumerate}
We observe that if we denote $D = |i-j|$, $\mu = \min(i,j)$ and $M = \max(i,j)$ then they are solutions of the following Diophantine equations:
$$(i-j-D)^2(j-i-D)^2 = 0,$$
$$(2M - i - j - D)^2 = 0,$$
$$(2\mu +D - i - j)^2 = 0.$$
This equations are better written as:
$$U_1 = \left(2^i - 2^{j+D}\right)^2\left (2^j - 2^{i+D}\right)^2 + \left (2^{2M} - 2^{i+j+D}\right )^2 +
\left (2^{2\mu+D} - 2^{i+j}\right )^2 = 0.$$

Now we are ready to express the first exponential Diophantine equation as a sum of squares. The expressions:
$$b = m + n + 1,$$
$$B = m + \max + t,$$
are only notations introducing the short-cuts $b$ and $B$. They are define only by parameters, so should not be considered unknowns. 

The configuration $c$ satisfies the following equation:
$$U_2 = \left (x - y_1 - c B^{nk} - y_2 B^{n(k+1)}\right )^2 +\left (2 \cdot 2^{c + z_1} - 2^{B^n} \right)^2 + \left (y_1 + z_2 + 1 - B^{nk}\right )^2 = 0.$$
The label $L$ of the program-line of the configuration is the solution $L$ of the equation:
$$U_3 = \left(2^c - 2^L (2^B)^{y_3}\right)^2 + \left(2^{L+z_3} -2^b\right)^2 = 0.$$
This program-line is extracted from $P$:
$$ U_4 = \left( P - y_4 - i b^{6L} - j b^{6L+1} - s b^{6L+2} - k b^{6L+3} - L_1 b^{6L+4} - L_2 b^{6L+5} - y_5 b^{6(L+1)}\right)^2+$$
$$+\left(2^{i+z_4} -2^ b\right)^2 + \left(2^{j+z_5} -2^ b\right)^2 + \left(2^{s+z_5} - 4\right)^2 + \left(2^{k+z_6} - 2^ b\right)^2 + \left(2^{L_1 + z_7} -2^b\right)^2 +\left(2^{L_2 + z_8} -2^b\right)^2 +$$
$$+\left( 2^{y_4+z_9}- 2^{ b^{6L}}\right)^2 =0.$$
Let $c'$ be the configuration resulting from $c$ by applying the program-line. This configuration verifies the equations:
$$U_5 = \left(x - y_6 - c' B^{n(k+1)} - y_7 B^{n(k+2)}\right)^2+ \left(2^{c'+z_{10}} -2^{ B^n} \right)^2 + \left(y_6 + z_{11} +1 -B^{n(k+1)}\right)^2 = 0.$$

Now we write down the disjunctive conditions. 

If we are dealing with a copy-command, $x_i := (x_j+s)\dot{-}1$, and $i \neq j$, then we construct the expression:

  $$I = \left(2^u - 1\right)^2 + \left(2^i - 2^{z_{12} +1}\right)^2 + \left(2^D - 2^{z_{13} +1}\right)^2 + \left(c - y_8 - e B^i - y_9 B^{\mu+1} - f B^j - y_{10} B^{M+1}\right)^2 + $$ $$+\left(y_8 + z_{14}+ 1 - B^\mu\right)^2 + \left(e + z_{15} + 1 - B\right)^2 + (y_9 + z_{16} + 1 - B^{z_{13}})^2 + \left(f + z_{17} + 1 - B\right)^2 + $$ $$ + \left[\left(2^{f+s} - 2^{z_{18}+1}\right)^2  + \left(c' - c + eB^i - (f + s - 1)B^i - 1\right)^2\right] \cdot \left[\left(2^{f+s} - 1\right)^2 + \left(c' - c + eB^i - 1\right)^2\right].$$

If we are dealing with a copy-command, $x_i := (x_j+s)\dot{-}1$, and $i = j$, then:

$$II = \left(2^u - 1 \right)^2 + \left(2^i - 2^{z_{12} +1}\right)^2 + \left(2^D - 1 \right)^2 + \left(c - y_8 - e B^i - y_9 B^{i+1} \right)^2 + $$ $$+\left(y_8 + z_{14}+ 1 - B^i\right)^2 + \left(2^{e + z_{15} + 1} - 2^B\right)^2 + $$ $$ + \left[\left(2^{e+s} - 2^{z_{18}+1}\right)^2  + \left(c' - c - (s - 1)B^i - 1\right)^2\right] \cdot \left[\left(2^{e+s} - 1\right)^2 + \left(2^{c'} - 2^{c + 1}\right)^2\right].$$

If we are dealing with a goto command, we construct:

  $$III = \left(2^i - 1 \right)^2 + \left(2^u - 2^{z_{12}+1}\right)^2 + \left(c - y_8 - e B^u - y_9 B^{u+1}\right)^2 + \left(y_8 + z_{14}+ 1 - B^u\right)^2 + $$ $$ +\left(2^{e + z_{15} + 1} - 2^B\right)^2 +
  \left[\left(2^e - 1\right)^2 + \left ( 2^{c'+L} - 2^{c+L_1}\right )^2 \right] \cdot \left[\left (2^e - 2^{z_{16} +1} \right)^2 + \left ( 2^{c'+L} - 2^{c+L_2}\right )^2 \right] .$$ 

For stop commands, we construct:

$$IV = \left(2^i - 1 \right)^2 + \left(2^u-1\right)^2 + \left (2^c - 2^{c'}\right )^2 $$ 

Consider the equation:
$$U_6 = I \cdot II \cdot III \cdot IV = 0.$$ 

We consider also the equation:
$$U_7 = \left( 2^{B^{n(t+1)}} - 2^{x + z_{19} + 1}\right)^2 = 0.$$

This equation expresses the fact that $x < B^{n(t+1)}$.
To sum up, consider the equation:

$$E = U_1 + U_2 + U_3 + U_4 + U_5 + U_6 + U_7= 0.$$

This expression depends on the following quantities:
$$E = E(m,n,P,x, \max,t)(k,c,c',L,i,j,s,u,L_1,L_2,y_1,\dots, y_{10},z_1,\dots, z_{17}).$$ 

The corresponding solution counting term:
$$N(m,n,P,x,\max,t)$$
essentially counts the number of values of $k$ with $0 \leq k \leq t$ such that the $k$-th configuration encoded in $x$ and the $(k+1)$-th configuration encoded in $x$ correspond to the program-line encoded in the $k$-th configuration. 

Consider a non-negative exponential Diophantine equation equivalent with:
$$x \bmod (B^{m+1}) = c_0\,\,\wedge \,\,  N(m,n,P,x,\max,t) = t.$$ 

Again, the first condition should be written as:
$$\left( 2^x - 2^{c_0 + B^{m+1}y} \right)^2 =0,$$
taking advantage of the fact that $B$ and $B^{m+1}$ consist of parameters only. 

Here $x$ is the unique unknown, while $(m,n,P, c_0, \max,t)$ are parameters. 

Any $x$ satisfying this equation can be seen as a sequence of $t+1$ configurations of a register machine. The first configuration is the starting configuration $c_0$ of our goto-program $P$, while any two successive configurations are so that one always obtains $c_{k+1}$ from $c_k$ by applying the determinist program $P$. It follows that this equation has a unique solution $x$. 

We replace $x$ by the expression $v + w + 1$. It follows that $x$ is exactly the number of solutions of the non-negative exponential Diophantine equation equivalent with:
$$ (v + w + 1) \bmod (B^{m+1}) = c_0\,\,\wedge \,\,  N(m,n,P,v + w + 1,\max ,t) = t.$$

We consider the counting solution term $M(m,n,P, c_0, \max,t)$ and we observe that:
$$M(m,n,P,c_0, \max ,t) = x.$$
It follows that the final configuration of the register machine is given by the arithmetic term:
$$C(m,n,P, c_0, \max,t) = \left \lfloor \frac{M(m,n,P,c_0, \max ,t)}{B^{n(t-1)}} \right \rfloor.$$
We make the convention that the program $P$ computes a function $f$ value in  $a$ is always read in the register $x_1$ which contained the value $a$ at the beginning. This means that the start configuration was $\iota_0 = (1, a, 0, \dots, 0)$, encoded by $c_0 = 1 + aB$, where $B = m + a + t$.  Also, suppose that the computation time for $f(a)$ is bounded by a function $\theta(a)$, which is also an arithmetic term. In this case, we obtain the following arithmetic term representation of the function $f$:
$$f(a) = \left \lfloor \frac{C(m,n,P,1 + aB, a,\theta(a)) \bmod B^2}{B} \right \rfloor. $$
In general, to compute some function of arity $r$, $B = m + \max(1, x_1, \dots, x_r) + \theta(x_1, \dots, x_r)$, $t = \theta(x_1, \dots, x_r)$, $c_0(x_1, \dots, x_r) = 1 + x_1B + \dots + x_rB^r$ but the result saved in the register $x_1$ will be read in the same way. 

\section{A wise arithmetic term}

Fix once and for all an encoding of first-order formulas, axiom schemes, and proofs that includes the symbols $;$ and $\vdash$. Statements of the form $T \vdash \varphi$ (where $T$ is a finite list of axioms and schemes separated by $;$ and $\varphi$ is a sentence) and fully formalized proofs in $T$ are both represented by natural numbers. We may impose any convenient syntactic discipline on proofs (numbered lines, explicit derivation hints such as ``modus ponens of line 15 and line 8'', separation by $;$, etc.).

In this section we exploit the existence of a special Turing machine, as follows:

\begin{lemma}\label{lemmawiseturingmachine}
There is a Turing machine fulfilling the following job: The machine starts on some input consisting of two words. The first word is the code of a statement $T\vdash \varphi$. The second word encodes a natural number $k$. If there exists a proof of $\varphi$ in $T$ consisting of at most $k$ characters, the machine stops and outputs this proof. If no such a proof does exist, the machine stops and outputs $0$. 
\end{lemma}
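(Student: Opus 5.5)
The plan is to build the machine by brute-force enumeration, using two standard facts. First, the set of correct formal proofs in a finitely presented theory $T$ (finitely many axioms together with finitely many axiom schemes) is decidable. Second, a proof can be checked in time polynomial in the length of the statement $T\vdash\varphi$ plus the length of the candidate proof, as recalled in the introduction with reference to \cite{immerman1999descriptive}. By Church's Thesis, it is enough to describe a terminating algorithm, which is then implemented on a deterministic one-tape Turing machine normed as in \cref{Section.TCT}. The algorithm splits the input into the code of $T\vdash\varphi$ and the number $k$. It then runs through all words $w$ over the fixed finite proof alphabet of length $1,2,\dots,k$, say in length-lexicographic order. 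For each $w$ it calls a proof-checking subroutine $\mathrm{Check}(T\vdash\varphi, w)$, which returns whether $w$ is a correctly formalized proof of $\varphi$ from $T$. On the first success it writes $w$ as output and halts. If the enumeration is exhausted, it writes $0$ and halts.

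The key steps, in order, are as follows.

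\textbf{(i) Proof checker.} Using the imposed syntactic discipline (numbered lines separated by $;$, each carrying an explicit justification hint), $\mathrm{Check}$ does three things:
\begin{enumerate}
\item it parses $w$ into lines;
\item for each line it verifies one of the following: the line is a logical axiom, an instance of an axiom listed in $T$, an instance of an axiom scheme listed in $T$ (matching against the scheme with a substitution), or a correct application of modus ponens or generalization to the earlier lines named in the hint;
\item it checks that the last line is literally $\varphi$.
\end{enumerate}
Each of these is a syntactic comparison of finite strings, so each is decidable in polynomial time.

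\textbf{(ii) Termination.} There are at most $\sum_{i\le k}|\Sigma|^i$ candidate words, and each check terminates. Hence the machine halts on every input. Its running time is bounded by $|\Sigma|^{k+1}\cdot \mathrm{poly}(|T\vdash\varphi|+k)$, which is an elementary function of the input; this bound is exactly what is needed later to feed the time bound into $\Upsilon$.

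\textbf{(iii) Normalization.} The machine is brought into the normed form of \cref{Section.TCT}. It erases its work tape except for the output, returns the head to cell $0$, and enters a final state $z_e$ with $\delta(z_e,\gamma)=(z_e,\gamma,0)$. Its work space uses only the nonnegative side of the tape, by the standard reduction cited from \cite{AroraBarak}.

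\textbf{(iv) Correctness.} If some proof of length $\le k$ exists, the enumeration reaches it (or an earlier one), so the machine outputs a proof. Otherwise no candidate passes $\mathrm{Check}$, and the output is $0$. To avoid ambiguity, the convention is fixed that $0$ is never the code of a proof, for instance by making every nonempty word's code positive.

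The main obstacle is step (i), specifically the correct handling of axiom schemes. Recognizing whether a formula is an instance of a scheme (for example, induction) requires parsing the formula and finding the instantiating subformula together with respecting variable-substitution conditions. I would sidestep the search by strengthening the syntactic discipline, which the statement allows: each scheme-instance line must also record the instantiating formula. Verification then reduces to performing one substitution and comparing strings, which is clearly polynomial. Everything else is routine bookkeeping, which I would state at the level of the Church–Turing thesis rather than writing out transition tables.
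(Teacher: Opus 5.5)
Your proposal is correct and follows the same route as the paper: enumerate all words of length at most $k$, run a polynomial-time proof checker (citing Immerman) on each, output the first valid proof found, and otherwise output $0$. The extra bookkeeping you add is detail the paper leaves implicit and that its later use of this lemma needs: the elementary running-time bound, normalization to the machine conventions of \cref{Section.TCT}, requiring scheme instances to record their instantiating formula, and fixing that $0$ never codes a proof.
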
 

\begin{proof} 
The machine deterministically generates all words of length $\leq k$ and deterministically checks whether any is a valid proof of $\varphi$ in $T$. It is known that every such verification takes deterministic polynomial time in $k$, see Immerman \cite{immerman1999descriptive}. Once a proof is found, the machine outputs the proof. Otherwise it outputs $0$. 
\end{proof} 

\begin{theorem}\label{theoremwiseterm}
There is an arithmetic term $W(n,k)$ with the following property. If $n$ is the code of a statement $T\vdash \varphi$ and there is a proof of $\varphi$ in $T$ whose code $\pi$ has length $\leq k$, then $W(n,k) = \pi$. In all other situations, $W(n,k) = 0$.  
\end{theorem}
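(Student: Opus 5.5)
The plan is to combine \cref{lemmawiseturingmachine} with the Turing complete term $\Upsilon$ of \cref{Section.TCT}, in the form of \cref{cor.representability}. Fix the deterministic Turing machine $\mathcal M$ given by \cref{lemmawiseturingmachine}. Normalize it as in \cref{Section.TCT}: one tape, never moving into negative cells, halting on cell $0$ with only the output on the tape, and final states that loop with $\delta(z_e,\gamma)=(z_e,\gamma,0)$. Let $C_0$ be the fixed number encoding the pair $(c,P)$ for this machine. The function $f(n,k)$ computed by $\mathcal M$ is then the function described in the theorem. It outputs the proof code $\pi$ when a proof of length $\le k$ exists, and $0$ otherwise, including when $n$ is not the code of a statement $T\vdash\varphi$. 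For this last case I let the machine first check syntactic well-formedness, which is polynomial time, and output $0$ if the check fails.

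The next step is to bound the running time of $\mathcal M$ by an arithmetic term $\theta(n,k)$. The machine enumerates all words of length $\le k$ over a fixed finite alphabet $\Sigma$, at most $|\Sigma|^{k+1}$ of them. For each word it runs a proof checker, which by the cited result of Immerman takes time polynomial in $k$ and in the length of $n$. Strictly, this requires $T$ to be given by finitely many axioms and schemes that are recognizable in polynomial time, as in the encoding fixed above. So there are constants $e,D$ with running time at most $D\,(n+k+2)^{e}\,2^{D(k+1)}$, where the length of $n$ is at most $n+1$. Writing the output costs at most $O(k)$ more steps. Therefore $\theta(n,k)=2^{D'(n+k+2)}$ is a valid bound for a suitable constant $D'$, and it is a classical arithmetic term.

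Then I apply the representation from \cref{Section.TCT}. The input pair $(n,k)$ is converted into the machine's input word by an arithmetic term $x(n,k,c)$; for instance $n$ and $k$ are written in binary, or in unary separated by a marker, with the digit-extraction done through $\lfloor\cdot/\cdot\rfloor$ and $\bmod$ by powers of $2^c$. Its length $l(n,k)$ is also a term. The output tape content $O(c,d,P,x,l,\theta)$ is decoded into the natural number $\pi$ by a term $b(o)$, since the proof code is just the tape word read in a fixed base. Setting $W(n,k)=b\bigl(O(c,d,P,x(n,k,c),l(n,k),\theta(n,k))\bigr)$, with $c,d,P$ constants, gives an arithmetic term. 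Correctness follows from the fact that $\theta(n,k)$ is at least the halting time and that final configurations are fixed points, so the configuration after $\theta(n,k)$ steps is the halting configuration.

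I expect the main obstacle to be the bookkeeping rather than the logic. First, the length-of-proof convention (characters versus code length) must match the enumeration bound. Second, the input and output encodings must be genuine arithmetic terms. I would handle the encodings by choosing a very simple alphabet and encoding, for example unary input as in the paper's one-argument example with a separator symbol. I would also make $\mathcal M$ output $\pi$ in unary, so that $b$ reduces to the Hamming weight count already shown to be a term. This costs only another exponential factor in time, which $\theta$ absorbs by enlarging $D'$.
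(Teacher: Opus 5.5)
Your proposal is correct and follows essentially the same route as the paper. You fix the proof-search machine of \cref{lemmawiseturingmachine} and plug it into the Turing complete term of \cref{Section.TCT}, using arithmetic terms for the input word, its length and a running-time bound, and then read off the final configuration; you are more explicit than the paper, which does not write down the time bound (your $2^{D'(n+k+2)}$), does not treat inputs $n$ that are not codes of statements, and does not decode the output tape into the number $\pi$.
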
 

\begin{proof}
Recall the Turing complete term: 
$$M(c, d, P, x, l, n). $$
Let $P_0 \in \mathbb N$ be the program of the machine described in \cref{lemmawiseturingmachine} and $c_0, d_0 \in \mathbb N$ the values corresponding to this machine. There are some arithmetic terms as follows: $x(n, k)$ computes the input for the machine, $l(x(n,k))$ computes the length of this input, $T(n,k)$ computes a value bigger than the number of steps necessary for the machine to halt on this input. The arithmetic term: 
$$W(n,k) = \left \lfloor \frac{M(c_0, d_0, P_0, x(l,k), l(x(n,k)), T(n,k))}{2^{c_0(3T(n,k)+2)(T(n,k)+l(x(n,k)))}} \right \rfloor$$
computes the output of the machine. 
\end{proof} 

\begin{cor}
Let $T$ be any recursively axiomatizable first-order theory that contains a fragment sufficient to represent all Kalmar elementary functions (for example $\mathrm{I}\Delta_0 + \exp$, $\mathrm{PRA}$, $\mathrm{PA}$, $\mathrm{ZFC}$, or $\mathrm{NBG}$).  
Then for every sentence $\varphi$ that is provable in $T$, there exists some $k \in \mathbb N$ such that the arithmetic term $W(n,k)$ outputs (the code of) a proof of $\varphi$ in $T$, where $n$ is the code of the statement $T \vdash \varphi$. 

However, there is \textbf{no} Kalmar elementary function $f : \mathbb N \to \mathbb N$ such that
\[
\forall n \Bigl( T \vdash \varphi_n \ \Longrightarrow\  \text{minimal proof length of $\varphi_n$ in $T$} \leq f(n) \Bigr),
\]
where $\varphi_n$ denotes the sentence whose code is $n$.
\end{cor}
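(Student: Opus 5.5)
The corollary has two parts, and I would prove them separately.

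\emph{Existence of $k$.} Suppose $T \vdash \varphi$. Fix one proof of $\varphi$ in $T$ and let $k_0$ be the length of its code. With $n$ the code of $T \vdash \varphi$, the proof I picked witnesses the hypothesis of \cref{theoremwiseterm} for every $k \geq k_0$. So $W(n,k)$ is a nonzero number $\pi$, and $\pi$ is the code of a proof of $\varphi$ in $T$ of length $\leq k$. It need not be the proof I picked: it is whichever one the machine of \cref{lemmawiseturingmachine} finds first in its enumeration. Either way it is a proof of $\varphi$. This half uses only that $T$ is recursively axiomatizable, since then the list of axioms and schemes can be encoded and proofs can be checked.

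\emph{Non-existence of an elementary bound.} I argue by contradiction. Suppose $f$ is Kalmar elementary and bounds the minimal proof length of every provable $\varphi_n$. Define
$$D(n) = \begin{cases} 1, & W(n, f(n)) \neq 0,\\ 0, & \text{otherwise.}\end{cases}$$
By the assumption, $D(n)=1$ exactly when $T \vdash \varphi_n$. By Mazzanti's theorem $f$ is an arithmetic term, so $D$ is a composition of arithmetic terms, for instance $D(n) = 1 \dotdiv (1 \dotdiv W(n,f(n)))$. Hence $D$ is an arithmetic term and in particular Kalmar elementary. That would make the set of theorems of $T$ decidable.

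This is the step I expect to be the main obstacle, and where the hypothesis on $T$ enters. Since $T$ represents all Kalmar elementary functions, and hence all recursive functions in the usual sense (it interprets enough arithmetic, such as Robinson's $Q$, or is consistent and extends it), Church's theorem in the essential-undecidability form says the set of theorems of $T$ is undecidable. This contradicts the decidability of $D$.

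To make this rigorous I would first check that the paper's fragment is strong enough for essential undecidability; $\mathrm{I}\Delta_0+\exp$ contains $Q$. I would also assume $T$ is consistent, since an inconsistent $T$ proves everything and $f$ could be a constant. As a more self-contained alternative, I would diagonalize: via the fixed-point lemma, build $\varphi$ asserting ``$D(\ulcorner\varphi\urcorner)=0$''. Because $D$ is elementary and $T$ represents it, $T$ decides $D$ at that code, which yields a contradiction directly. I would present the Church's-theorem route and mention this diagonal argument as a backup.
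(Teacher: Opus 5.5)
Your argument is correct and is essentially the one the paper leaves implicit, since the paper states the corollary without proof: the first part is read off \cref{theoremwiseterm}, and the second is the observation that an elementary bound $f$ would make $1 \dotdiv (1 \dotdiv W(\cdot, f(\cdot)))$ an arithmetic term deciding provability in $T$, contradicting the undecidability of $T$. You are right that consistency of $T$ has to be added to the hypotheses, because an inconsistent $T$ proves every $\varphi$ by a proof of length linear in that of $\varphi$, so an elementary bound does exist; and, strictly speaking, $W$ should be applied to the code of the statement $T \vdash \varphi_n$ (the input $W$ expects) rather than to the code $n$ of $\varphi_n$, via the elementary map sending one to the other.
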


In summary, every theorem of $T$ can be recovered by a fixed arithmetic term, but the required witness $k$ is not in general elementary recursive in the positional code of the sentence. Thus proof search for any recursively axiomatizable theory is internalized inside one fixed arithmetic term.

\begin{remark}
The same term yields a G\"odel-style self-referential sentence. Fix a formal theory $T$ (for example $\mathrm{ZFC}$) that is strong enough to formalize arithmetic syntax and proofs.

Let $\psi(m)$ be the formula
\[
\forall k \in \mathbb{N}, \; W(m,k)=0,
\]
where $m$ is a free variable intended to hold the positional code of a sentence. Let $q$ be the positional code of $\psi(m)$.

Let $\mathrm{sub}(p,n)$ be an elementary arithmetic term that substitutes the numeral for $n$ into the free variable of the formula coded by $p$ (built from the same pairing and projection primitives used throughout this paper). Define
\[
\varphi := \psi(\mathrm{sub}(q,q)).
\]
By construction, $\mathrm{sub}(q,q)$ is exactly the positional code of $\varphi$, so $\varphi$ asserts:
\[
\forall k \in \mathbb{N}, \; W(\mathrm{sub}(q,q),k)=0.
\]
Hence, if $T$ is consistent, then $T\nvdash \varphi$; moreover, in the standard model, $\varphi$ is true. Indeed, if $T\vdash \varphi$, then for some $k$ the term $W(\mathrm{sub}(q,q),k)$ outputs a nonzero proof code, contradicting the very statement of $\varphi$.
\end{remark}

Further consequences are immediate. The auxiliary term
$$I(n,k) = 1 \dotdiv (1 \dotdiv W(n,k))$$
outputs $1$ if there exists a $T$-proof of $\varphi$ whose code is at most $k$, and outputs $0$ otherwise.

Suppose now that every valid proof has code $> 2$. There is a term $\omega(n)$ which maps the code of $T\vdash \varphi$ to the code of $T\vdash \neg \varphi$. Define
$$J(n,k) := (1 \dotdiv (1 \dotdiv W(n,k))) + (2 \dotdiv (2 \dotdiv W(\omega(n),k))).$$
Then $J(n,k)=1$ if a bounded proof of $\varphi$ exists, $J(n,k)=2$ if a bounded proof of $\neg\varphi$ exists, and $J(n,k)=0$ otherwise. Therefore $\varphi$ is independent of $T$ if and only if
$$\forall k \in \mathbb N, \, J(n,k)=0.$$
In the non-independent case, the term
$$S(n,k) = W(n,k) + W(\omega(n),k)$$
returns a proof of $\varphi$ or of $\neg\varphi$. Assuming that the conclusion is encoded at the end of the proof string, a sufficiently large $k$ reveals which of the two is provable. In particular, $T$ is complete if and only if for every sentence $\varphi$ in its language there exists $k \in \mathbb N$ such that $S(n,k)\neq 0$, where $n$ is the code of $T\vdash \varphi$.

Let $\mathrm{ZFC}$ denote Zermelo--Fraenkel set theory with Choice, and let $Z$ be the code of the statement $\mathrm{ZFC}\vdash (0=0)\wedge(0\neq 0)$. Then $\mathrm{ZFC}$ is consistent if and only if
$$\forall k \in \mathbb N, \, W(Z,k)=0.$$
\begin{cor}
We showed in \cref{Sect.Introduction} that the identity problem for arithmetic terms is undecidable. Hence, if $\mathrm{ZFC}$ is consistent, then the identity problem for arithmetic terms has true instances that are not provable in $\mathrm{ZFC}$.
\end{cor}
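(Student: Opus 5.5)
The corollary is a combination of two facts already in the paper. The first is the reduction of consistency of $\mathrm{ZFC}$ to an instance of the identity problem, via the wise term $W$ of \cref{theoremwiseterm}. The second is Gödel's second incompleteness theorem. The plan is to exhibit one specific pair of one-variable arithmetic terms whose identity is true, provided $\mathrm{ZFC}$ is consistent, but is not provable in $\mathrm{ZFC}$.

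Let $Z$ be the fixed code of $\mathrm{ZFC}\vdash (0=0)\wedge(0\neq 0)$. Set $t_1(k) := W(Z,k)$ and $t_2(k) := 0$. Substituting the numeral $Z$ into $W(n,k)$ gives a genuine univariate arithmetic term in $k$, so the identity $\forall k\, t_1(k)=t_2(k)$ is an instance of the identity problem in the sense of the Corollary of \cref{Sect.HTP}. By the equivalence stated just before the corollary, this instance is true exactly when $\mathrm{ZFC}$ is consistent.

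It remains to show that $\mathrm{ZFC}$ does not prove this instance. The key step is that $\mathrm{ZFC}$ itself proves
\[
\forall k\, W(Z,k)=0 \;\longleftrightarrow\; \mathrm{Con}(\mathrm{ZFC}).
\]
This requires formalizing inside $\mathrm{ZFC}$ the whole correctness argument for $W$: the hypercube counting term of \cref{Sect.Hypercube}, the uniqueness argument of \cref{metatheorem} applied to Turing machine runs, and the correctness of the proof-search machine of \cref{lemmawiseturingmachine}. Every ingredient is a finite combinatorial statement about natural numbers proved by elementary induction, so these proofs go through in $\mathrm{ZFC}$ (indeed already in $\mathrm{I}\Delta_0+\exp$).

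Granting this, if $\mathrm{ZFC}\vdash \forall k\, W(Z,k)=0$, then $\mathrm{ZFC}\vdash \mathrm{Con}(\mathrm{ZFC})$. By Gödel's second incompleteness theorem, $\mathrm{ZFC}$ would then be inconsistent, contradicting the hypothesis. So the instance $t_1\equiv t_2$ is true but unprovable in $\mathrm{ZFC}$, which proves the corollary.

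The main obstacle is the internalization step: verifying that the paper's informal correctness proofs for $M$, and hence $W$, are formalizable in $\mathrm{ZFC}$. The most delicate part is the hypercube counting identity involving $\HW$ and the generalized geometric sums $G_r$. I would handle it by observing that all these claims are $\Pi_1$ or simple universally quantified identities with elementary proofs, and cite standard formalizability of elementary arithmetic in $\mathrm{ZFC}$.

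The undecidability of the identity problem from \cref{Sect.HTP} could also give the conclusion more softly: a consistent, recursively axiomatized $\mathrm{ZFC}$ cannot prove all true instances, since otherwise enumerating proofs would decide the problem. That argument needs $\mathrm{ZFC}$ to prove the false instances' negations, which holds because a false instance is refuted by a single numerical computation. I would present this as a second argument, since it matches the corollary's phrasing "Hence".
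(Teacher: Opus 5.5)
Your proposal is correct, and your second argument is the paper's route. The corollary comes with no proof beyond ``Hence'', so it is meant to follow from the undecidability of the identity problem. What you sketch is the standard way to make that precise. $\mathrm{ZFC}$ is recursively axiomatized. A false instance is refuted in $\mathrm{ZFC}$ by evaluating both terms at a concrete counterexample, so by consistency $\mathrm{ZFC}$ proves no false instance. If it also proved every true one, searching for a $\mathrm{ZFC}$-proof of the instance or of its negation would decide the problem.

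Your primary argument is genuinely different. It takes the explicit instance $W(Z,k)\equiv 0$ from the paragraph preceding the corollary and applies G\"odel's second incompleteness theorem. This gives a concrete witness rather than a bare existence statement, but at a real cost. You need a $\mathrm{ZFC}$-proof of $\forall k\, W(Z,k)=0 \rightarrow \mathrm{Con}(\mathrm{ZFC})$; only this direction matters. That means internalizing in $\mathrm{ZFC}$ the hypercube counting identity, the uniqueness argument behind \cref{metatheorem}, and the Turing-machine simulation. You also need to prove inside $\mathrm{ZFC}$ that the proof checker implemented by $P_0$ agrees with the canonical proof predicate, because the second incompleteness theorem is sensitive to how consistency is expressed. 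All of this is plausible for a natural $P_0$, but in your proposal it is asserted rather than carried out. The undecidability argument, by contrast, does not involve $W$ at all and only needs $\mathrm{ZFC}$ to verify numerical evaluations of closed terms. So you should lead with the undecidability argument and present the $W(Z,k)$ instance as a complementary, more explicit illustration.
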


Let $\mathrm{RH}$ denote the Riemann Hypothesis, and let $R$ be the code of $\mathrm{ZFC}\vdash \mathrm{RH}$. If for some $k \in \mathbb N$ one has
$$W(R,k)\neq 0,$$
then
$$(\mathrm{ZFC}\textrm{ is inconsistent}) \,\vee\, (\mathrm{RH}\textrm{ is true}).$$
For formal convenience, one may replace $\mathrm{ZFC}$ by the equivalent theory $\mathrm{NBG}$, which has a finite axiomatization.

Finally, consider decidable theories for which true sentences of length $l$ always admit proofs of length at most $L(l)$, with $L$ Kalmar elementary. For example, the first-order theory of the ordered field of real numbers has a decision procedure by quantifier elimination \cite{tarski1951decision}, and modern bounds are doubly exponential in the input length. Hence proof lengths are bounded by a corresponding arithmetic term $T(l)$. There is also a term $\xi(n)$ such that if $n$ is the code of a sentence $\varphi$ in the language of ordered fields, then $\xi(n)$ is the code of $T_{\mathbb R}\vdash \varphi$. Let $l(n)$ be the term computing input length, and define
$$K(n) = W(\xi(n), T(l(n))).$$
\begin{cor}
There exists an arithmetic term $K(n)$ such that if $n$ is the code of a sentence $\varphi$ true in the ordered field of real numbers, then $K(n)$ is the code of a proof of $\varphi$. As usual, evaluating this term uses a fixed number of arithmetic operations independent of the input. For other values of $n$, one has $K(n)=0$.
\end{cor}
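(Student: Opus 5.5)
The term is $K(n) = W(\xi(n), T(l(n)))$, as defined just before the statement. The plan is to assemble it from \cref{theoremwiseterm}, three auxiliary arithmetic terms $\xi$, $l$, $T$, and the Tarski--Seidenberg decision procedure. Since arithmetic terms are closed under composition, $K$ is an arithmetic term once each ingredient is. Its evaluation then costs a fixed number of operations, the sum of the fixed costs of its components.

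\textbf{Step 1 (the term $\xi$).} Fix a finite axiomatization of $T_{\mathbb R}$, the theory of real closed ordered fields, or a recursive axiom scheme written as a finite list of schemes, as the encoding in the previous section permits. Let $\tau$ be the constant positional code of the string ``$T_{\mathbb R}\vdash$''. Then $\xi(n)$ is the concatenation $\tau + 2^{c\,\lambda}\cdot n$, or its analogue in the chosen base, where $\lambda$ is the length of $\tau$. So $\xi(n)$ is an arithmetic term.

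\textbf{Step 2 (the term $l$).} For the length, I would use $l(n)=\lfloor \log_{2^c} n\rfloor$. This is Kalmar elementary, so it is expressible by Mazzanti's theorem. Alternatively, it can be obtained directly by a counting term for the unique $\lambda$ with $2^{c\lambda}\le n<2^{c(\lambda+1)}$.

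\textbf{Step 3 (the bound $T$).} This step, the main one, supplies an arithmetic term $T(l)$ such that every true sentence of length $l$ has a $T_{\mathbb R}$-proof of length at most $T(l)$. The intended route is to formalize quantifier elimination, for instance cylindrical algebraic decomposition or the Tarski--Seidenberg procedure, as a proof procedure inside $T_{\mathbb R}$. Each elimination step becomes a derivation of an equivalence $\exists x\,\phi \leftrightarrow \phi'$, and each step's size is bounded by its running time. The overall size is then doubly exponential, at most $2^{2^{p(l)}}$ for a fixed polynomial $p$. Since real closed fields are complete, a true closed $\varphi$ reduces to a variable-free true statement, which is then checked by evaluation; the result is a proof of length $\le 2^{2^{p(l)}}$, an arithmetic term. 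I expect this to be the main obstacle, since it requires a proof-theoretic rather than merely algorithmic bound. I would rely on the known fact that the verification of each quantifier-elimination step is a polynomially long formal derivation.

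\textbf{Step 4 (conclusion).} Now apply \cref{theoremwiseterm} with input $\xi(n)$ and bound $T(l(n))$. If $n$ codes a true sentence $\varphi$, a proof of length $\le T(l(n))$ exists, so $W$ returns a proof code. If $n$ is not the code of a sentence, or codes a false sentence, then $\xi(n)$ is not a provable statement, by the soundness of $T_{\mathbb R}$ in $\mathbb R$. Hence $W$, and so $K$, returns $0$.
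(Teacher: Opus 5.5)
Your proposal is correct and is essentially the paper's argument: the paper also defines $K(n)=W(\xi(n),T(l(n)))$, and it simply asserts that the doubly exponential quantifier-elimination bound gives an arithmetic-term bound $T(l)$ on proof lengths, whereas you rightly note that this needs a formalized, proof-theoretic version of the decision procedure and you explicitly use soundness in $\mathbb R$ for the $K(n)=0$ clause. One small correction to Step 1: the theory of real closed fields is not finitely axiomatizable, so only your second option (a finite list of axioms and schemes, e.g.\ the odd-degree root scheme) is available.
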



\begin{thebibliography}{15}
\providecommand{\natexlab}[1]{#1}
\providecommand{\url}[1]{\texttt{#1}}
\expandafter\ifx\csname urlstyle\endcsname\relax
  \providecommand{\doi}[1]{doi: #1}\else
  \providecommand{\doi}{doi: \begingroup \urlstyle{rm}\Url}\fi

\bibitem{AroraBarak}{ Sanjeev Arora, Boaz Barak}. 
\newblock{Computational Complexity: A Modern Approach.}
\newblock{Cambridge University Press.}
\newblock{2009.}

\bibitem{Bruin} H. Bruin. 
\newblock{For almost every tent map, the turning point is typical.}
\newblock{\it Fundamenta Mathematicae.}
\newblock{155.3,}
\newblock{215 - 235,}
\newblock{1998.}

\bibitem{Collatz}{L. Collatz}.
\newblock{Aufgaben E}.
\newblock{\it Mathematische Semesterberichte}.
\newblock{1,}
\newblock{35,}
\newblock{1950.}

\bibitem[{P. Clote}(1964)]{Clote1995}
{P. Clote}.
\newblock Computation Models and Function Algebras,
\newblock in D. Leivant, ed.,
\newblock \textit{Logic and Computational Complexity. LCC 1994. Lecture Notes in Computer Science, vol 960.},
\newblock Springer,
\newblock Berlin, Heidelberg,
\newblock 1995.
\newblock URL \url{https://doi.org/10.1007/3-540-60178-3_81}

\bibitem[Grzegorczyk(1953)]{grzegorczyk1953someclasses}
A.~Grzegorczyk.
\newblock {Some Classes of Recursive Functions}.
\newblock \emph{Rozprawy Matematyczne}, \textbf{4}, 1953.
\newblock URL \url{http://matwbn.icm.edu.pl/ksiazki/rm/rm04/rm0401.pdf}.

\bibitem[Immerman(1999)]{immerman1999descriptive}
N.~Immerman.
\newblock \emph{Descriptive Complexity}.
\newblock Springer, 1999.
\newblock URL \url{https://doi.org/10.1007/978-1-4612-0539-5}.

\bibitem[Jones and Matiyasevich(1982)]{jones1982symmetric}
J.~P. Jones and Yu.~V. Matiyasevich.
\newblock {A New Representation for the Symmetric Binomial Coefficient and Its
  Applications}.
\newblock \emph{{Annales des Sciences Math\'ematiques du Qu\'ebec}}, 6\penalty0
  (1):\penalty0 81--97, 1982.

  Gaston Julia (1918): Sur l'itération des fonctions rationnelles, Journal de Mathématiques Pures et Appliquées 

\bibitem{Julia} Gaston Julia.
\newblock{Sur l'itération des fonctions rationnelles.}
\newblock{\it Journal de Mathématiques Pures et Appliquées.}
\newblock{8e série,}
\newblock{tome 1,}
\newblock{p. 47-245,}
\newblock{1918.}

\bibitem[Marchenkov(1980)]{marchenkov1980superposition}
S.~S. Marchenkov.
\newblock {A Superposition Basis in the Class of Kalmar Elementary Functions}.
\newblock \emph{\textit{Mathematical Notes of the Academy of Sciences of the
  USSR}}, \textbf{27}\penalty0 (3):\penalty0 161--166, 1980.
\newblock ISSN 0001-4346.
\newblock URL \url{https://doi.org/10.1007/BF01140159}.

\bibitem[Marchenkov(2007)]{marchenkov2007superposition}
S.~S. Marchenkov.
\newblock {Superpositions of Elementary Arithmetic Functions}.
\newblock \emph{\textit{Journal of Applied and Industrial Mathematics}},
\textbf{1}\penalty0 (3):\penalty0 351--360, 2007.
\newblock ISSN 1990-4789.
\newblock URL \url{https://doi.org/10.1134/S1990478907030106}. 

\bibitem{matiyasevich1993hilbert} { Y. Matiyasevich}. 
\newblock{ Hilbert's Tenth Problem.} 
\newblock MIT press, 
\newblock ISBN 0-262-13295-8, 
\newblock 1993.


\bibitem{May} R. M. May. 
\newblock Simple mathematical models with very complicated dynamics.
\newblock {\it Nature,}
\newblock 261(5560),
\newblock 459-467.

\bibitem{mazzanti2002plainbases}
S.~Mazzanti.
\newblock Plain bases for classes of primitive recursive functions.
\newblock \emph{\textit{Mathematical Logic Quarterly}}, \textbf{48}\penalty0
  (1):\penalty0 93--104, 2002.
\newblock ISSN 0942-5616.
\newblock URL
\url{https://doi.org/10.1002/1521-3870(200201)48:1%3C93::AID-MALQ93%3E3.0.CO;2-8}.

\bibitem[Mendelson(2015)]{mendelson2015introduction}
E.~Mendelson.
\newblock \emph{{Introduction to Mathematical Logic}}.
\newblock Chapman and Hall/CRC, New York, sixth edition, 2015.
\newblock URL \url{https://doi.org/10.1007/978-1-4615-7288-6}

\bibitem[{I. Oitavem}(1997)]{oitavem1997}
{I. Oitavem}.
\newblock {New Recursive Characterizations of the Elementary Functions and the
  Functions Computable in Polynomial Space}.
\newblock \emph{Revista Matemática de la Universidad Complutense de Madrid},
  10\penalty0 (1):\penalty0 109--125, 1997.
\newblock URL \url{http://eudml.org/doc/44242}.

\bibitem[Prunescu(2025)]{prunescu2025fibonacci2}
M.~Prunescu.
\newblock On other two representations of the $ \textrm{C} $-recursive integer
  sequences by terms in modular arithmetic.
\newblock \emph{Journal of Symbolic Computation}, 130, 2025.
\newblock URL \url{https://doi.org/10.1016/j.jsc.2025.102433}.

\bibitem[Prunescu and Sauras-Altuzarra(2024)]{prunescusauras2024factorial}
M.~Prunescu and L.~Sauras-Altuzarra.
\newblock An arithmetic term for the factorial function.
\newblock \emph{Examples and Counterexamples}, 5, 2024.
\newblock ISSN 2666-657X.
\newblock URL \url{https://doi.org/10.1016/j.exco.2024.100136}.

\bibitem{prunescusauras2025manyfunctions}
M.~Prunescu and L.~Sauras-Altuzarra.
\newblock Computational considerations on the representation of number-theoretic functions by arithmetic terms.
\newblock \emph{Journal of Logic and Computation}, 35, 2025.
\newblock URL \url{https://doi.org/10.1093/logcom/exaf012}.

\bibitem[Prunescu and Sauras-Altuzarra(2025)]{prunescusauras2025fibonacci}
M.~Prunescu and L.~Sauras-Altuzarra.
\newblock On the representation of C-recursive integer sequences
  by arithmetic terms.
\newblock \emph{Journal of Difference Equations and Applications}, 31, 2025.
\newblock URL \url{https://doi.org/10.1080/10236198.2025.2530478} 

\bibitem[Prunescu, Sauras-Altuzarra, Shunia(2025)]{prunescusaurasshuniaminimal} 
M.~Prunescu, L.~Sauras-Altuzarra and J.~M. Shunia.
\newblock A Minimal Substitution Basis for the Kalmar Elementary Functions
\newblock URL \url{https://arxiv.org/abs/2505.23787} 


\bibitem[Prunescu and Shunia(2024{\natexlab{a}})]{prunescushunia2024gcd}
M.~Prunescu and J.~M. Shunia.
\newblock Arithmetic-term representations for the greatest common divisor, 2024{\natexlab{a}}.
\newblock URL \url{https://arxiv.org/abs/2411.06430}.
\newblock Preprint.

\bibitem[Prunescu and Shunia(2024{\natexlab{b}})]{prunescushunia2024primes}
M.~Prunescu and J.~M. Shunia.
\newblock On arithmetic terms expressing the prime-counting function and the
  n-th prime, 2024{\natexlab{b}}.
\newblock URL \url{https://arxiv.org/abs/2412.14594}.
\newblock Preprint.

\bibitem[Prunescu and Shunia(2025)]{prunescushunia2025fibonacci3}
\newblock M. Prunescu and J. M. Shunia,
\newblock On Modular Representations of C-Recursive Integer Sequences,
\newblock \emph{Journal of Integer Sequences}, 28, 2025.
\newblock URL \url{https://cs.uwaterloo.ca/journals/JIS/VOL28/Prunescu/prunescu3.pdf}

\bibitem[{D. Rödding}(1964)]{Rodding1964}
{D. Rödding}.
\newblock {Über die Eliminierbarkeit von Definitions-schemata in der Theorie der rekursiven Funktionen}.
\newblock \emph{Mathematical Logic Quarterly},
  10, 315--330, 1964.
\newblock URL \url{https://doi.org/10.1002/malq.19640101806}.

\bibitem{Schoning2008}
\newblock Uwe Sch\"oning.
\newblock {Theoretische Informatik - kurz gefa\ss t.} 
\newblock Spektrum Akademischer Verlag, 
\newblock Heidelberg,
\newblock  ISBN 978-3-8274-1824-1,
\newblock 2008.

\bibitem{shuniasauras2025}
\newblock J. M. Shunia and L. Sauras Altuzarra,
\newblock Arithmetic Terms for Sums of Multinomial Coefficients,
\newblock \textit{The Ramanujan Journal}.
\newblock Volume 68.
\newblock Issue 3.
\newblock 2025.
\newblock \url{https://doi.org/10.1007/s11139-025-01222-3} 

\bibitem[Tarski(1951)]{tarski1951decision}
A.~Tarski.
\newblock \emph{A Decision Method for Elementary Algebra and Geometry}.
\newblock University of California Press, Berkeley, 1951.

Turing, A. M. (1936). "On Computable Numbers, with an Application to the Entscheidungsproblem" (PDF). Proceedings of the London Mathematical Society. 2. 42 (published 1937): 230–265.

\bibitem{Turing} A. M. Turing.
\newblock On Computable Numbers, with an Application to the Entscheidungsproblem.
\newblock {\it Proceedings of the London Mathematical Society.}
\newblock 2.42.
\newblock 1936.

\end{thebibliography}
\end{document}